\newtheorem{theo}{Theorem} 
\newtheorem{lemma}{Lemma}[section]
\newtheorem{prop}[lemma]{Proposition}
\newtheorem{claim}[lemma]{Claim}
\theoremstyle{remark}
\theoremstyle{definition}
\newcommand{\RR}{\mathbb{R}}
\newcommand{\eps}{\varepsilon}
\newcommand{\PPP}{\mathcal{P}}
\newcommand{\tZ}{\widetilde{Z}}
\newcommand{\te}{\tilde{e}}
\newcommand{\tE}{\widetilde{E}}
\newcommand{\vu}{\vec{u}}
\DeclareMathOperator{\im}{Im}
\newcommand{\hdot}{\dot{H}^1}
\newcommand{\EMPH}[1]{\medskip\noindent\textit{#1}.}
\DeclareMathOperator{\supp}{supp}
\DeclareMathOperator{\Div}{div}
\numberwithin{equation}{section} 
\title[Global existence for focusing wave equations]{Global existence for solutions of the focusing wave equation with the compactness property}
\author[T.~Duyckaerts]{Thomas Duyckaerts$^1$}
\author[C.~Kenig]{Carlos Kenig$^2$}
\author[F.~Merle]{Frank Merle$^3$}
\thanks{$^1$LAGA, Universit\'e Paris 13 (UMR 7539). Partially supported by ERC Grant Dispeq and ERC advanced grant  no. 291214, BLOWDISOL}
\thanks{$^2$University of Chicago. Partially supported by NSF Grants DMS-0968472 and DMS-1265249}
\thanks{$^3$Cergy-Pontoise (UMR 8088), IHES. Partially supported by ERC advanced grant  no. 291214, BLOWDISOL}
\date{\today}
\begin{document}
\begin{abstract}
 We prove that every solution of the focusing energy-critical wave equation with the compactness property is global. We also give similar results for supercritical wave and Schr\"odinger equations.
\end{abstract}

\maketitle

\section{Introduction}
In this note we consider solutions with the compactness property for (mainly) the energy-critical wave equation in dimension $N\in \{3,4,5\}$. This is the equation
\begin{equation}
 \label{CP}
 \left\{\begin{aligned}
         \partial_t^2u -\Delta u&=|u|^{\frac{4}{N-2}}u,\quad t\in I,\; x\in \RR^N\\
         (u,\partial_tu)_{\restriction t=0}&=\vu_0\in \dot H^1\times L^2,
        \end{aligned}\right.
\end{equation} 
where $I$ is an interval ($0\in I$), $u$ is real-valued, $\hdot=\hdot(\RR^N)$ and $L^2=L^2(\RR^N)$. For such solutions $(u,\partial_tu)\in C^0(I,\dot{H}^1\times L^2)$, we denote the maximal interval of existence $(T_-(u),T_+(u))=I_{\max}(u)$. We say that a solution has \emph{the compactness property} if there exists $\lambda(t)>0$, $x(t)\in \RR^N$, $t\in I_{\max}(u)$ such that
\begin{equation}
 \label{defK}
K=\left\{\left( \lambda^{\frac{N-2}{2}}(t)u\big( t,\lambda(t)x+x(t)\big), \lambda^{\frac{N}{2}}(t)\partial_tu\big( t,\lambda(t)x+x(t) \big)\right)\;:\; t\in I_{\max}(\vec{u}_0)\right\}
 \end{equation} 
is precompact in $\dot{H}^1\times L^2$. Solutions with the compactness property have been extensively studied (see for instance \cite{KeMe08,DuKeMe11a,DuKeMe12,DuKeMe13Pb} for equation \eqref{CP}). The reason for these studies is that, if one considers solutions to \eqref{CP} such that 
$$\sup_{0<t<T_+(u)} \|(u(t),\partial_tu(t))\|_{\dot{H}^1\times L^2}<\infty$$
and which do not scatter, there always exist $t_n\to T_+(u)$ such that, up to modulation, $(u,\partial_tu)(t_n)$ weakly converges to 
$(U(0),\partial_tU(0))$ in $\hdot\times L^2$, where $U$ is a solution with the compactness property. This is Proposition 1.10 in \cite{DuKeMe15a}, which, as is noted there, is valid for a wide class of dispersive equations (see also \cite{Tao07DPDE} for nonlinear Schr\"odinger equation in high space dimension). This clearly shows the crucial role played by solutions with the compactness property in the study of bounded solutions of dispersive equations, with no a priori size restriction. Note also that in the case of \eqref{CP}, a much more precise result (Theorem 1 in \cite{DuKeMe15a}) is valid.

In \cite{DuKeMe11a} we showed that, in the radial case, up to scaling and sign change, the only solution of \eqref{CP} with the compactness property is the ``ground-state'' $W(x)=\left( \frac{1}{1+|x|^2/N(N-2)} \right)^{\frac{N-2}{2}}$, which is the only non-zero radial solution in $\dot{H}^1(\RR^N)$ (up to scaling and sign change) of the elliptic equation $\Delta u+|u|^{\frac{4}{N-2}}u=0$. In the sequel, we will denote by $\Sigma$ the set of non-zero solutions to this elliptic equation. In \cite{DuKeMe15a}, Proposition 1.8, a), we showed (by a simple virial argument), that if $u$ has the compactness property, then $T_-(u)=-\infty$, or $T_+(u)=+\infty$. In \cite{KeMe08}, the second and third authors showed that if $T_+(u)<\infty$ and $u$ has the compactness property, then there exists $x_+\in \RR^N$ such that if $t\in I_{\max}$, 
$$\supp (u(t),\partial_tu(t))\subset \{ |x-x_+|\leq |T_+(u)-t|\}$$
and 
$$\lim_{t\to T_+(u)} \frac{\lambda(t)}{T_+(u)-t}=0.$$
In particular, the self similar blow-up, given by $\lambda(t)\approx T_+(u)-t$, is excluded. 

In \cite{DuKeMe15a}, Proposition 1.8 b), we showed that, if $u$ has the compactness property, then there exist two sequences $\{t_n^{\pm}\}$ in $(T_-(u),T_+(u))$, with $\lim_{n\to \pm\infty}=T_{\pm}(u)$ and two elements $Q^{\pm}$ of $\Sigma$ and a vector $\vec{\ell}$ with $|\vec{\ell}|<1$, such that, up to modulation, $(u(t_n^{\pm}),\partial_tu(t_n^{\pm}))\to (Q_{\vec{\ell}}^{\pm}(0),\partial_tQ_{\vec{\ell}}^{\pm}(0))$ strongly in $\hdot\times L^2$, where $Q^{\pm}_{\vec{\ell}}$ is the Lorentz transform of the solution $Q^{\pm}$, given by 
$$ Q^{\pm}_{\vec{\ell}}=Q^{\pm}\left(\left(-\frac{t}{\sqrt{1-|\vec{\ell}|^2}}+\frac{1}{|\vec{\ell}|^2} \left(\frac{1}{\sqrt{1-|\vec{\ell}|^2}}-1\right)\vec{\ell}\cdot x\right)\vec{\ell}+x\right),$$
so that $Q_{\vec{\ell}}^{\pm}(t,x)=Q^{\pm}_{\vec{\ell}}(0,x-t\vec{\ell})$, which are clearly solutions of \eqref{CP} with the compactness property. In \cite{DuKeMe13Pb}, we proved that the class of solutions with the compactness property is invariant under Lorentz transformation. In light of this result, we have (see \cite{DuKeMe15a,DuKeMe13Pb}) the rigidity conjecture for solutions with the compactness property: $0$ and $Q_{\vec{\ell}}$, $Q\in \Sigma$, $|\vec{\ell}|<1$, are the only solutions of \eqref{CP} with the compactness property. In \cite{DuKeMe13Pb} we proved this conjecture, under a nondegeneracy assumption on $Q^+$ (or $Q^-$). 

The main result in this note is an extension of the non-existence of self-similar solution with the compactness property in \cite{KeMe08}. Namely:
\begin{theo}
\label{T:main}
 Let $u$ be a solution of \eqref{CP} with the compactness property. Then $u$ is global.
\end{theo}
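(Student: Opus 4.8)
The plan is to argue by contradiction and rule out finite-time blow-up. By Proposition~1.8~a) of \cite{DuKeMe15a}, at least one of $T_\pm(u)$ is infinite; after a time reversal it suffices to show that $T_+(u)<\infty$ is impossible, so suppose $T_+(u)<\infty$. By the result of \cite{KeMe08} recalled above there is $x_+\in\RR^N$ with $\supp(u(t),\partial_tu(t))\subset\{|x-x_+|\le T_+(u)-t\}$ for all $t$, and $\lambda(t)/(T_+(u)-t)\to 0$ as $t\to T_+(u)$, so the concentration is strictly faster than self-similar. By Proposition~1.8~b) of \cite{DuKeMe15a} there are times $t_n\to T_+(u)$ along which, after rescaling by $\lambda(t_n)$ and translating by $x(t_n)$, the solution converges strongly in $\hdot\times L^2$ to $(Q_{\vec\ell}(0),\partial_tQ_{\vec\ell}(0))$ for some $Q\in\Sigma$ and some $\vec\ell$ with $|\vec\ell|<1$.

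Next I would exploit the \emph{exact} confinement in the cone through the conservation laws. Writing the energy density $e=\tfrac12|\partial_tu|^2+\tfrac12|\nabla u|^2-\tfrac{N-2}{2N}|u|^{2N/(N-2)}$, the equation gives the local identity $\partial_te=\Div(\partial_tu\,\nabla u)$, so that, using the compact support,
\begin{equation*}
\frac{d}{dt}\int_{\RR^N}(x-x_+)\,e(t,x)\,dx=-\vec P(u),\qquad \vec P(u)=\int_{\RR^N}\partial_tu\,\nabla u\,dx,
\end{equation*}
with $\vec P(u)$ conserved. Since the support shrinks to $\{x_+\}$ and $\int|e|\,dx$ is bounded (by the compactness property and Sobolev), the left integral tends to $0$, whence $\int(x-x_+)e\,dx=\vec P(u)\,(T_+(u)-t)$ for every $t$. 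On the other hand $\vec P$ is invariant under the scaling and translation used in the modulation, so the strong convergence along $t_n$ forces $\vec P(u)=\vec P(Q_{\vec\ell})$ and $E(u)=E(Q_{\vec\ell})$; recalling that boosting a stationary solution multiplies its energy by $(1-|\vec\ell|^2)^{-1/2}$ and produces momentum $\vec\ell\,E(Q_{\vec\ell})$, we get $\vec P(u)=\vec\ell\,E(u)$ with $E(u)=E(Q_{\vec\ell})>0$. Thus the energy center travels at the constant velocity $\vec\ell$ straight into the apex of the cone, which is consistent but not yet contradictory.

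I would then remove the boost. Because the class of solutions with the compactness property is invariant under Lorentz transforms (\cite{DuKeMe13Pb}) and a backward light cone is mapped to a backward light cone, applying the Lorentz transform of velocity $\vec\ell$ produces a new solution with the same properties, still blowing up in finite time and confined to a cone, but whose near-blow-up profile along the corresponding sequence is now the \emph{stationary} element $Q\in\Sigma$ (so $\vec\ell=0$ and $\vec P=0$). It therefore suffices to rule out finite-time blow-up when the bubble is stationary.

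For the stationary case I would run a virial computation on the cone. With $z(t)=\int(x-x_+)\cdot\nabla u\,\partial_tu\,dx$ one has, again using the exact compact support to kill all boundary terms, $\frac{d}{dt}\int|x-x_+|^2e\,dx=-2z(t)$ and $z'(t)=N\,E(u)-N\int|\partial_tu|^2\,dx-\int|\nabla u|^2\,dx$. The moment $\int|x-x_+|^2e\,dx$ is squeezed between $0$ and $C(T_+(u)-t)^2$, while the strong convergence to the stationary bubble $Q$ (for which $\partial_tu\to0$ and $\int|\nabla Q|^2=N\,E(Q)$) pins down the behaviour of $z$ and of $\int|\partial_tu|^2$, $\int|\nabla u|^2$ near $T_+(u)$. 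The main obstacle, and the heart of the matter, is to convert this soft information into a quantitative contradiction: one must show that a stationary-bubble concentration at a rate $\lambda(t)\ll T_+(u)-t$, with exact compact support in the shrinking cone, is incompatible with the momentum and moment identities above. I expect this to require the sharp rate $\lambda(t)/(T_+(u)-t)\to0$ together with an exterior-energy (channels-of-energy) lower bound for the profile $Q$, rather than the soft identities alone; the other delicate point is to upgrade the sequential information of Proposition~1.8~b) to control of the solution for all $t$ near $T_+(u)$.
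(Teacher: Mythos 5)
Your proposal is not a complete proof: the decisive step is the one you explicitly leave open at the end. Everything up to that point --- the cone support from \cite{KeMe08}, the sequential convergence to $Q_{\vec\ell}$ from \cite{DuKeMe15a}, the energy-center identity, the reduction to $\vec P=0$ by a Lorentz transform, and the virial/moment identities near $T_+(u)$ --- is correct bookkeeping (I checked your identities; e.g. $z'(t)=NE(u)-N\int(\partial_tu)^2-\int|\nabla u|^2$ is right), but, as you yourself say, it is ``consistent but not yet contradictory''. The two obstacles you name are genuine and are precisely why this route cannot be finished with the tools you invoke: (i) Proposition 1.8 b) of \cite{DuKeMe15a} only gives information along a sequence $t_n\to T_+(u)$, and soft conservation laws do not upgrade it to all $t$ near $T_+(u)$; (ii) ruling out a stationary bubble concentrating at rate $\lambda(t)\ll T_+(u)-t$ inside the shrinking cone by analysis \emph{near the blow-up time} is essentially the rigidity conjecture itself (settled in \cite{DuKeMe13Pb} only under a nondegeneracy assumption on $Q$), so one should not expect the moment identities, or even channels of energy for a general $Q\in\Sigma$, to close the argument there.

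The paper's proof avoids the blow-up end entirely; its key novelty is to work as $t\to-\infty$. After the same normalizations you make ($T_+(u)=1$, $T_-(u)=-\infty$, $P(\vu_0)=0$ via the Lorentz transform), it proves Proposition \ref{P:lambda}: $\lambda(t)/t\to 0$ as $t\to-\infty$ (by a compactness argument in self-similar variables plus the Liouville-type Proposition 6.10 of \cite{KeMe08}). This makes the boundary terms of the virial quantities $Z,\tZ$ in \eqref{monotonicity} of size $o(|t|)$, so integrating over $[t,0]$ and letting $t\to-\infty$ gives the Ces\`aro average \eqref{C9}: the time average of $\frac1N\int|u|^{\frac{2N}{N-2}}$ equals $E(\vu_0)$. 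Separately, Proposition \ref{P:critical} runs the self-similar change of variables centered at the \emph{future} blow-up point but for $s\to-\infty$, where the monotone Lyapunov functional $\tE$ satisfies the strict inequality $\tE(-1)<E(\vu_0)$ (equality would force $u\equiv0$); this yields the strict bound \eqref{C5} on the logarithmic time average of the same quantity. The elementary Claim (Ces\`aro averages determine logarithmic averages) then gives the contradiction. So the contradiction comes from comparing two long-time averages over $(-\infty,0]$, not from any refined description of the solution near $T_+(u)$; this is what lets the argument bypass the rigidity conjecture that blocks your approach.
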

Theorem \ref{T:main} is proved in Section \ref{S:main}.
We will also prove a similar result for supercritical nonlinear wave equation (see Section \ref{S:super}) and supercritical Schr\"odinger equation (see Section \ref{S:NLS}).

In all the article, we let $\vu=(u,\partial_tu)$.

\section{Energy-critical case}
\label{S:main}
In this section we prove Theorem \ref{T:main}. We recall that the energy:
\begin{equation}
\label{defE}
E(\vu)=\frac 12\int_{\RR^N} |\nabla u(t,x)|^2\,dx+\frac{1}{2}\int_{\RR^N} (\partial_tu(t,x))^2\,dx-\frac{N-2}{2N}\int_{\RR^N} |u(t,x)|^{\frac{2N}{N-2}}\,dx 
\end{equation} 
is well-defined and independent of $t$. 
The proof relies on the following two propositions, which we will prove in Subsection \ref{SS:SSvariables} using a self-similar change of variables.
\begin{prop}
 \label{P:critical}
 Let $u$ be a solution of \eqref{CP} with $p=\frac{N+2}{N-2}$ such that 
 \begin{gather}
 \label{C2}
 T_+(\vec{u}_0)=1,\quad T_-(\vec{u}_0)=-\infty.\\
 \label{C3}
 \supp u\subset\left\{ |x|\leq 1-t\right\}\\
 \label{C4}
 \sup_{t<1}\|\nabla u(t)\|^2_{L^2}+\|\partial_t u(t)\|^2_{L^2}=M_0<\infty.
 \end{gather}
Then
\begin{equation}
\label{C5}
\limsup_{T\to-\infty}\frac{1}{\log(2-T)} \frac{1}{N}\int_T^0\frac{1}{2-t}\int_{\RR^N}|u(t,x)|^{\frac{2N}{N-2}}\,dx\,dt<E(\vec{u}_0). 
\end{equation} 
\end{prop}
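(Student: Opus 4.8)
The plan is to pass to the self-similar variables adapted to the backward light cone with vertex $(1,0)$. Writing $s=-\log(1-t)$, $y=x/(1-t)$ and $w(s,y)=(1-t)^{\frac{N-2}{2}}u\big(t,(1-t)y\big)$, the support condition \eqref{C3} confines $w(s,\cdot)$ to the fixed ball $\{|y|<1\}$ with zero trace on $\{|y|=1\}$, while $t\to-\infty$ corresponds to $s\to-\infty$. A direct change of variables shows that the three scale-invariant quantities
\[
G(s)=\int_{|y|<1}|\nabla_y w|^2,\qquad H(s)=\int_{|y|<1}(Dw)^2,\qquad P(s)=\int_{|y|<1}|w|^{\frac{2N}{N-2}},
\]
with $Dw=\partial_s w+\Lambda w$ and $\Lambda w=y\cdot\nabla_y w+\tfrac{N-2}{2}w$, reproduce $\|\nabla u(t)\|_{L^2}^2=G$, $\|\partial_t u(t)\|_{L^2}^2=H$ and $\int|u(t)|^{\frac{2N}{N-2}}=P$. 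Moreover $ds=\frac{dt}{1-t}$, so $\frac{dt}{2-t}=\frac{1-t}{2-t}\,ds$ agrees with $ds$ up to $1+o(1)$ and $\log(2-T)=|s_T|(1+o(1))$ as $T\to-\infty$. Thus \eqref{C5} is equivalent to the assertion that the backward Cesàro mean $\langle g\rangle:=\limsup_{S\to-\infty}\frac{1}{|S|}\int_S^0 g(s)\,ds$ of $\tfrac1N P$ is strictly below $E(\vu_0)$.

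The engine is one pointwise identity. Since $u(t)$ is compactly supported no boundary terms arise, and $\frac{d}{dt}\int u\,\partial_t u=\|\partial_t u\|_{L^2}^2-\|\nabla u\|_{L^2}^2+\int|u|^{\frac{2N}{N-2}}$; eliminating $\|\nabla u\|^2$ through \eqref{defE} gives
\[
\tfrac1N\int_{\RR^N}|u|^{\frac{2N}{N-2}}\,dx=E(\vu_0)-\|\partial_t u\|_{L^2}^2+\tfrac12\,\tfrac{d}{dt}\int_{\RR^N}u\,\partial_t u\,dx .
\]
I would integrate this against $\frac{dt}{2-t}$ on $(T,0)$ and divide by $\log(2-T)$. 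The constant term yields exactly $E(\vu_0)$, and $-\|\partial_t u\|^2$ yields $-\langle H\rangle$. The last term is a total derivative: integrating by parts and using $|\int u\,\partial_t u|\le\|u\|_{L^2}\|\partial_t u\|_{L^2}\lesssim(1-t)M_0$ (Poincaré on the ball of radius $1-t$, then \eqref{C4}), the boundary contribution is $O(1/\log(2-T))$, while what survives is, in the self-similar variable, $+\tfrac12\langle R\rangle$ with $R(s)=\int_{|y|<1}w^2$. Hence the left side of \eqref{C5} equals $E(\vu_0)-\big(\langle H\rangle-\tfrac12\langle R\rangle\big)$, and everything reduces to proving $\langle H\rangle>\tfrac12\langle R\rangle$.

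Two ingredients control $\langle H\rangle$. First, a sharp Hardy inequality: because $w(s,\cdot)\in H^1_0(\{|y|<1\})$ one has $\int(y\cdot\nabla w)w=-\tfrac N2\int w^2$, whence $\int(y\cdot\nabla w)^2\ge\tfrac{N^2}{4}\int w^2$ by Cauchy--Schwarz, and expanding $\Lambda$ yields the clean bound $\int(\Lambda w)^2\ge\int w^2$ (strict unless $w=0$). Second, the self-similar form of the virial, namely $\tilde J'=\tilde J-H$ for the bounded functional $\tilde J=\int\Lambda w\,Dw$, averages to $\langle H\rangle=\langle\int(\Lambda w)^2\rangle-\langle\int(\partial_s w)^2\rangle$, since the Cesàro mean of the derivative of a bounded function vanishes. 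Combining these, $\langle H\rangle-\tfrac12\langle R\rangle\ge\tfrac12\langle R\rangle-\langle\int(\partial_s w)^2\rangle$. To remove the $\partial_s w$ term I would invoke the monotone self-similar energy of \cite{KeMe08}: it is nonincreasing in $s$ with $\frac{d}{ds}\mathcal E=-c\int(\partial_s w)^2\,d\mu$, it is bounded by \eqref{C4}, and therefore $\int_{-\infty}^{0}\!\int(\partial_s w)^2\,d\mu\,ds<\infty$, forcing $\langle\int(\partial_s w)^2\rangle=0$. This already gives the non-strict bound $\tfrac1N\langle P\rangle\le E(\vu_0)$.

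The hard part is upgrading this to the strict inequality of \eqref{C5}, and this is where I expect the real work to lie. The estimate above degenerates exactly when $\langle R\rangle=0$ and $\langle\int(\Lambda w)^2\rangle=0$ simultaneously, that is when $H$ and $R$ average to zero while $G$ does not. Since for a rescaled ground state $w_\lambda=\lambda^{-\frac{N-2}{2}}W(\cdot/\lambda)$ one has $\int(\Lambda w_\lambda)^2=\lambda^{2}\int(\Lambda W)^2\to0$ while $\int|\nabla w_\lambda|^2$ is scale invariant, this is precisely the scenario in which $w$ \emph{concentrates a bubble at the tip $y=0$ of the cone} as $s\to-\infty$ (equivalently $u$ concentrates at $x=0$ at a scale $\ll 1-t$), and in that regime $\tfrac1N\langle P\rangle=E(\vu_0)$. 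Excluding this concentration is the crux; it is the extension, now at the base of the cone rather than at the blow-up time, of the non-existence of self-similar solutions in \cite{KeMe08}. I would attempt it by combining the monotonicity of $\mathcal E$ with a virial localized at the scale of the putative bubble, to show that a nonvanishing concentrated profile would have to be an exact self-similar (hence $s$-stationary) solution, contradicting \eqref{C2}; once concentration is ruled out one recovers $\langle R\rangle>0$ (or the strict Hardy gap), and hence $\langle H\rangle-\tfrac12\langle R\rangle>0$, which is \eqref{C5}.
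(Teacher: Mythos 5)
Your reduction of \eqref{C5} to the statement $\langle H\rangle-\tfrac12\langle R\rangle>0$ (via the virial identity, energy conservation, and the weighted-in-time averaging) is correct bookkeeping, and it is in the same spirit as the paper's arguments. But the proof has two genuine gaps, one of which is fatal to the strategy as designed. First, your change of variables is centered at the vertex $(1,0)$, so $\supp w(s,\cdot)$ is the \emph{full closed ball} $\{|y|\le 1\}$ for every $s$. The monotone self-similar energy of \cite{KeMe08} that you invoke carries the degenerate weights $(1-|y|^2)^{-1/2}$ and $(1-|y|^2)^{-3/2}$; for an $\dot H^1\times L^2$ profile supported up to $\{|y|=1\}$ these weighted integrals need not even be finite, and they are certainly not ``bounded by \eqref{C4}''. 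Consequently the key claim $\int_{-\infty}^0\int(\partial_sw)^2\,d\mu\,ds<\infty$, hence $\langle\int(\partial_sw)^2\rangle=0$, is unjustified. This is precisely why the paper shifts the vertex to $(2,0)$ (the choice $\delta=1$ in \cite{KeMe08}): then $\supp w\subset\{|y|\le 1-e^s\}$ (see \eqref{C15}), all weighted quantities are finite (though a priori growing, cf.\ \eqref{C19}--\eqref{C20}), and even so the two-sided bound \eqref{C29} on $\tE$ requires real work (Steps 1--2, using the identities \eqref{C22}--\eqref{C23} and Lemma \ref{L:C6}), not just \eqref{C4}.

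Second, and more fundamentally, the strict inequality --- which is the entire content of the proposition --- is left as a plan, and the plan aims at the wrong mechanism. For solutions concentrating at scale $\lambda(t)=o(|t|)$, which are exactly the solutions to which the proposition is applied in the main theorem (cf.\ Proposition \ref{P:lambda}), one has $\langle H\rangle=\langle R\rangle=0$, so your chain of inequalities degenerates to the non-strict bound with equality, and no Hardy gap or virial bookkeeping as $s\to-\infty$ can recover strictness; ruling out this scenario under \eqref{C2}--\eqref{C4} alone is equivalent to proving \eqref{C5} itself, so your sketched rigidity argument is, in effect, circular as well as not carried out. The paper's strictness comes from the opposite end of the cone, near the blow-up time: $\tE$ is monotone, $\lim_{s\to 0^-}\tE(s)=E(\vu_0)$ (Lemma \ref{L:C5}), and $\tE(-1)<E(\vu_0)$ \emph{strictly}, because equality in \eqref{C21} would force $\partial_sw\equiv 0$ on $(-1,0)$, and the shrinking support $\{|y|\le 1-e^s\}$ then forces $w\equiv 0$, contradicting $T_+(u)=1$; Step 3 then bounds the time-average of the nonlinear term over $(\sigma,-1)$ by $\tE(-1)+C|\sigma|^{-1/2}$, which is strictly below $E(\vu_0)$. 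This mechanism is structurally unavailable in your setup: with the vertex at $(1,0)$ the support of $w$ is the fixed ball $\overline{B_1}$ for all $s$, so the hypothesis $T_+(u)=1$ (which must enter any proof of strictness) leaves no trace in your self-similar picture.
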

In the next proposition, we denote by $P(\vec{u})$ the momentum
$$P(\vec{u}(t))=\int_{\RR^N}\nabla u\partial_tu\,dx \in \RR^N,$$
which is independent of time for a solution $u$ of \eqref{CP}.

\begin{prop}
 \label{P:lambda}
 Let $u$ be a solution with the compactness property such that $P(\vu_0)=0$ and $T_+(u)<\infty$, $T_-(u)=-\infty$. Then 
$$ \lim_{t\to-\infty} \frac{\lambda(t)}{t}=0.$$
\end{prop}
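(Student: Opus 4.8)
The plan is to argue by contradiction in the self-similar frame attached to the forward blow-up point, and to exclude the resulting self-similar profile by means of the strict energy inequality \eqref{C5} of Proposition~\ref{P:critical} (this is the point where the ``extension of the non-existence of self-similar solutions'' is used).

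\emph{Reduction.} Since $T_+(u)<\infty$, the Kenig--Merle support result quoted in the introduction gives $x_+\in\RR^N$ with $\supp\vu(t)\subset\{|x-x_+|\le T_+(u)-t\}$ for all $t$; after a space translation I take $x_+=0$ and write $T=T_+(u)$. As $t\to-\infty$ one has $T-t\sim|t|$, so the assertion $\lambda(t)/t\to0$ is equivalent to $\lambda(t)/(T-t)\to0$. Suppose this fails: there are $\delta>0$ and $t_n\to-\infty$ with $\lambda(t_n)\ge\delta\,(T-t_n)$. The compactness property gives $\sup_t\|\vu(t)\|_{\hdot\times L^2}=M_0<\infty$ (the norm is scale invariant), so after a rescaling normalizing $T=1$ the hypotheses \eqref{C2}--\eqref{C4} of Proposition~\ref{P:critical} hold and \eqref{C5} is available; note also that the three terms of $E$ are separately scale and translation invariant, so $E$ is preserved by the rescalings below.

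\emph{Extraction of a limit profile.} Rescaling at the self-similar scale, set
$$\tv_n=\Big((T-t_n)^{\frac{N-2}{2}}u\big(t_n,(T-t_n)\cdot\big),\ (T-t_n)^{\frac{N}{2}}\partial_tu\big(t_n,(T-t_n)\cdot\big)\Big).$$
The sequence $\{\tv_n\}$ is bounded in $\hdot\times L^2$ and, by the support bound, $\supp\tv_n\subset\{|x|\le1\}$. The self-similar rescaling differs from the compactness rescaling of \eqref{defK} by the scale factor $\lambda(t_n)/(T-t_n)$ and the translation $x(t_n)/(T-t_n)$, both of which stay in a fixed compact subset of $(0,\infty)\times\RR^N$ by the support constraint and the lower bound $\lambda(t_n)\ge\delta(T-t_n)$; hence the precompactness of $K$ upgrades to strong convergence, along a subsequence, $\tv_n\to\vec V=(V_0,V_1)$ in $\hdot\times L^2$, with $\vec V\ne0$, $\supp\vec V\subset\{|x|\le1\}$, and $E(\vec V)=E(\vu_0)$. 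Since $P$ is scale and translation invariant and continuous for strong convergence, $P(\vec V)=P(\vu_0)=0$; this vanishing excludes a traveling profile $Q_{\vec{\ell}}$ (for which $\lambda(t)/t\not\to0$) and legitimizes the comparison with a non-moving, i.e.\ self-similar, profile.

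\emph{Identification and contradiction.} In the self-similar variables $y=x/(T-t)$, $s=\log(T-t)$, $w(s,y)=(T-t)^{\frac{N-2}{2}}u(t,x)$, equation \eqref{CP} becomes a damped wave equation on the unit ball carrying a Lyapunov functional whose $s$-derivative is $-c\int(\partial_sw)^2\rho\,dy\le0$, with $\rho$ a weight degenerating at $|y|=1$. Integrability of this dissipation (a consequence of the energy bound) lets one select $s_n\to+\infty$, associated to times $t_n'$ still satisfying $\lambda(t_n')/(T-t_n')\ge\delta'$ (using that the compactness property forbids fast oscillation of $\lambda$ over a unit $s$-interval), along which $\partial_sw(s_n)\to0$. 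Thus $\vec V$ is the Cauchy data of a stationary self-similar profile $\Phi$, i.e.\ $u_V(t,x)=(T-t)^{-\frac{N-2}{2}}\Phi(x/(T-t))$ solves \eqref{CP}, has the compactness property, and blows up at the self-similar rate. A Pohozaev identity for $\Phi$ yields $\frac1N\int_{\RR^N}|\Phi|^{\frac{2N}{N-2}}\,dx=E(u_V)$, while applying \eqref{C5} of Proposition~\ref{P:critical} to $u_V$ (for which the potential term is constant in $t$) forces $\frac1N\int_{\RR^N}|\Phi|^{\frac{2N}{N-2}}\,dx<E(u_V)$; hence $\Phi=0$ and $\vec V=0$, contradicting $\vec V\ne0$. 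Therefore $\lambda(t)/(T-t)\to0$, that is $\lambda(t)/t\to0$.

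\emph{Main obstacle.} The delicate point is the identification step: turning the dissipation bound into a genuinely stationary limit $\vec V$ along the \emph{prescribed} sequence of times (matching the Lyapunov-good times $s_n$ with the times where $\lambda(t_n)/(T-t_n)$ is bounded below), and upgrading the weak limit to strong convergence of the nonlinear term up to the boundary $|y|=1$, where $\rho$ degenerates. Ruling out concentration of the potential energy both at scales $\ll\lambda(t_n)$ and at this boundary is precisely where the compactness property and the quantitative gap in \eqref{C5} are indispensable.
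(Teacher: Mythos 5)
Your overall skeleton --- argue by contradiction, pass to self-similar variables, extract a limit profile, show it is stationary, then rule out the stationary profile --- is the same as the paper's, but the two points you yourself flag as delicate are genuine gaps, and your route around the second one does not close. On stationarity: you propose to select dissipation-good times $s_n$ and match them with the times where $\lambda(t_n)/(T-t_n)\geq\delta$, justified by the assertion that compactness ``forbids fast oscillation of $\lambda$ over a unit $s$-interval''. The local comparability of $\lambda$ for compact solutions holds only on time scales of order $\lambda(t)$; to propagate a lower bound on $\lambda(t)/(T-t)$ across an interval of length $\sim (T-t_n)$ you must iterate this, and since the scale may shrink by a fixed factor at each step, the iteration can a priori collapse before the interval is covered. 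The paper avoids this matching problem entirely: in its Step 1 it uses perturbation theory (continuous dependence) in the unscaled variables to obtain convergence $w(\sigma_n+s)\to w_*(s)$ \emph{uniformly on a fixed interval} $[0,s_0]$ of self-similar time, and then the finiteness of the total dissipation \eqref{C30} (a byproduct of Step 2 of the proof of Proposition \ref{P:critical}) gives $\int_0^{s_0}\int_{B_1}(\partial_sw_n)^2\,dy\,ds\to0$, which forces $\partial_sw_*\equiv0$ on the whole interval. No selection of good times, and no regularity of $t\mapsto\lambda(t)$, is ever needed.

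The more serious gap is the rigidity step. Applying Proposition \ref{P:critical} to the self-similar solution $u_V$ is legitimate (its hypotheses \eqref{C2}--\eqref{C4} do hold for $u_V$, and the left-hand side of \eqref{C5} then indeed equals $\frac1N\int|\Phi|^{\frac{2N}{N-2}}$), but the identity $\frac1N\int|\Phi|^{\frac{2N}{N-2}}=E(\vec u_V)$ that produces your contradiction is asserted, not proved. Writing $\Lambda\Phi=\frac{N-2}{2}\Phi+y\cdot\nabla\Phi$, your identity is equivalent to $\int|\nabla\Phi|^2+\int(\Lambda\Phi)^2=\int|\Phi|^{\frac{2N}{N-2}}$, and this is \emph{not} what the natural multiplier gives: multiplying the profile equation \eqref{L8} by $\Phi\rho$ and integrating by parts yields the weighted relation $\int\rho\left(|\nabla\Phi|^2-(y\cdot\nabla\Phi)^2\right)+\frac{N(N-2)}{4}\int\rho\,\Phi^2=\int\rho\,|\Phi|^{\frac{2N}{N-2}}$ with $\rho=(1-|y|^2)^{-1/2}$, which does not match yours. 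Any further Pohozaev-type manipulation must moreover be justified against the degeneracy of $\rho$ at $|y|=1$, where the support of $\Phi$ touches the boundary; this is exactly what the weighted integrability conditions \eqref{L8'}--\eqref{L10} are for, and carrying out such identities under those conditions is the content of Proposition 6.10 of \cite{KeMe08}, which is precisely the result the paper invokes to conclude (after verifying \eqref{L7}--\eqref{L10} for the limit profile). In other words, your final step presupposes an identity whose proof is of the same depth as the rigidity theorem you are trying to bypass, so the contradiction is not actually obtained. Two minor remarks: the facts $E(\vec V)=E(\vu_0)$ and $P(\vec V)=0$ are never used (in particular, the claim that vanishing momentum ``legitimizes'' comparison with a self-similar profile has no mathematical content); and the paper uses the hypothesis $P(\vu_0)=0$ through \eqref{L1} to control the centers $x(\tau_n)$, which you instead control by the support condition --- that particular substitution is fine.
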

\subsection{Proof of the main result}
We first assume Propositions \ref{P:critical} and \ref{P:lambda} and prove Theorem \ref{T:main}. We argue by contradiction. Let $u$ be a solution of \eqref{CP} such that $T_+(u)<\infty$. Translating in time, we can assume $T_+(u)=1$. By \cite{DuKeMe15a}, $T_-(u)=-\infty$. 
As in the beginning of Subsection 4.1 in \cite{DuKeMe13Pb}, we can use the Lorentz transform on $u$ and assume that $P(\vu_0)=0$. Using a by now standard argument (see \cite{KeMe08}), this implies
\begin{equation}
 \label{xt}
\lim_{t\to-\infty}\frac{x(t)}{t}=0.
\end{equation} 
Let
\begin{equation*}
Z(t)=-\int_{\RR^N} x\cdot\nabla u\partial_t u-\frac{N-2}{2}\int_{\RR^N} u\partial_t u,\quad
\tZ(t)=-\int_{\RR^N} x\cdot\nabla u\partial_t u-\frac{N}{2}\int_{\RR^N} u\partial_t u.
\end{equation*}
Then
\begin{equation}
 \label{monotonicity}
Z'(t)=\int_{\RR^N} (\partial_tu)^2,\quad \tZ'(t)=\int_{\RR^N} \left(|\nabla u|^2-|u|^{\frac{2N}{N-2}}\right).
\end{equation} 
Using Proposition \ref{P:lambda}, \eqref{xt} and the precompactness of $K$, one gets
$$\lim_{t\to-\infty}\frac{1}{t}Z(t)=\lim_{t\to-\infty}\frac{1}{t}\tZ(t)=0.$$
Integrating \eqref{monotonicity}, we deduce:
\begin{gather}
 \label{C6}
 \lim_{t\to-\infty}\frac{1}{t}\int_{t}^0\int_{\RR^N}(\partial_tu)^2\,dx\,dt=0\\
 \label{C7}
 \lim_{t\to-\infty}\frac{1}{t}\int_{t}^0\int_{\RR^N}|\nabla u(t,x)|^2-|u(t,x)|^{\frac{2N}{N-2}}\,dx\,dt=0.
 \end{gather} 
Combining \eqref{C6} and \eqref{C7} with the identity $$\frac{1}{N}\int_{\RR^N} |u|^{\frac{2N}{N-2}}=E(\vu_0)-\frac{1}{2}\left( \int_{\RR^N} |\nabla u|^2-\int_{\RR^N} |u|^{\frac{2N}{N-2}} \right)-\frac 12 \int_{\RR^N} (\partial_t u)^2,$$
we obtain 
\begin{equation}
 \label{C9}
 \lim_{t\to -\infty} \frac{1}{N\,t}\int_{t}^0\int_{\RR^N} |u(t,x)|^{\frac{2N}{N-2}}\,dx\,dt=E(\vu_0).
\end{equation} 
By \cite{KeMe08}, we have $\supp u\subset \{|x|\leq 1-t\}$. Thus $u$ satisfies the assumptions of Proposition \ref{P:critical},
and \eqref{C9} contradicts the conclusion \eqref{C5} of Proposition \ref{P:critical}, in view of the following elementary claim, proved in the appendix:
\begin{claim}
 Let $g:[0,+\infty)\to \RR$, bounded and continuous, such that 
 \begin{equation}
  \label{C10}
  \lim_{T\to+\infty} \frac{1}{T}\int_0^T g(t)\,dt=\ell\in \RR.
 \end{equation} 
 Then:
 \begin{equation}
  \label{C11}
  \lim_{T\to+\infty}
  \frac{1}{\log(2+T)}\int_0^T \frac{g(t)}{2+t}\,dt=\ell.
 \end{equation} 
\end{claim}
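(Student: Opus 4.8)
The plan is to reduce the statement to the classical fact that Cesàro convergence implies convergence of the logarithmically weighted average, via a single integration by parts against the primitive of $g$. First I would introduce $G(T)=\int_0^T g(t)\,dt$, which is well-defined and $C^1$ since $g$ is continuous and bounded; the hypothesis \eqref{C10} then reads $G(T)=\ell T+\eps(T)$ with $\eps(T)=o(T)$ as $T\to+\infty$. Integrating by parts (with $u=(2+t)^{-1}$, $dv=g(t)\,dt$) and using $G(0)=0$ gives
$$\int_0^T \frac{g(t)}{2+t}\,dt = \frac{G(T)}{2+T} + \int_0^T \frac{G(t)}{(2+t)^2}\,dt.$$

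Next I would dispose of the boundary term: since $G(T)/(2+T)\to\ell$ it is in particular bounded, so after division by $\log(2+T)$ it tends to $0$. For the remaining integral I substitute $G(t)=\ell t+\eps(t)$ and split it into a main term and an error term. The main term is explicit: using the decomposition $\frac{t}{(2+t)^2}=\frac{1}{2+t}-\frac{2}{(2+t)^2}$ one finds
$$\ell\int_0^T \frac{t}{(2+t)^2}\,dt = \ell\left(\log(2+T)+\frac{2}{2+T}-\log 2-1\right),$$
so that dividing by $\log(2+T)$ and letting $T\to+\infty$ produces exactly $\ell$.

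It remains to show that the error term $\int_0^T \frac{\eps(t)}{(2+t)^2}\,dt$ is $o(\log(2+T))$, which is the only point requiring a little care, though it stays elementary. Given $\delta>0$, I choose $T_0$ so that $|\eps(t)|\le \delta t$ for all $t\ge T_0$. The contribution of $[0,T_0]$ is a fixed constant (negligible after dividing by $\log(2+T)$), while the contribution of $[T_0,T]$ is bounded in absolute value by $\delta\int_{T_0}^T \frac{t}{(2+t)^2}\,dt\le \delta\log(2+T)+O(1)$ by the computation above. Hence $\limsup_{T\to+\infty}\frac{1}{\log(2+T)}\bigl|\int_0^T \frac{\eps(t)}{(2+t)^2}\,dt\bigr|\le\delta$, and since $\delta>0$ is arbitrary this limsup is $0$. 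Collecting the boundary term, the main term, and the error term yields \eqref{C11}. I expect no serious obstacle here: boundedness of $g$ serves only to make $G$ finite and well-behaved, and the whole argument rests on the averaging hypothesis \eqref{C10} together with the elementary fact that $\int_0^T \frac{t}{(2+t)^2}\,dt\sim\log(2+T)$.
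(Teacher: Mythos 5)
Your proof is correct and is essentially the paper's own argument: your integration by parts produces exactly the paper's identity, since your $G(t)/(2+t)$ is precisely the paper's running average $H(t)=\frac{1}{2+t}\int_0^t g(s)\,ds$, so that $\int_0^T \frac{g(t)}{2+t}\,dt = H(T)-H(0)+\int_0^T \frac{H(t)}{2+t}\,dt$ coincides with your decomposition into boundary term plus $\int_0^T \frac{G(t)}{(2+t)^2}\,dt$. The only difference is in the finish: the paper changes variables via $s=\log(2+t)$ and invokes Ces\`aro convergence of the average of $H(e^s-2)\to\ell$, whereas you inline that step by writing $G(t)=\ell t+o(t)$, computing $\int_0^T \frac{t}{(2+t)^2}\,dt\sim\log(2+T)$ explicitly, and running the $\delta$-splitting estimate by hand --- a self-contained rendering of the same argument.
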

\subsection{Self-similar variables}
\label{SS:SSvariables}
We complete here the proof of Theorem \ref{T:main} by proving Propositions \ref{P:critical} and \ref{P:lambda}. As in \cite[Section 6]{KeMe08}, we use a self-similar change of variables and a Lyapunov functional in the new variables (see \cite{GiKo89} for the introduction of this change of variable for heat equations and \cite{MeZa03} for semilinear wave equations). The main novelty of the proofs is that we also use this change of variables for negative times (and indeed, as $t\to-\infty$), whereas it was only applied close to the blow-up time in \cite{KeMe08}. 

We start with a few preliminary lemmas that are essentially contained in \cite[Section 6]{KeMe08}.
 Let $u$ be as in Proposition \ref{P:critical}. 
Let:
\begin{gather}
 \label{C13}
 y=\frac{x}{2-t},\quad s=-\log(2-t)\\
 \label{C14} 
 w(s,y)=(2-t)^{\frac{N-2}{2}}u(t,x)=e^{-\frac{s(N-2)}{2}}u(2-e^{-s},e^{-s}y)
\end{gather}
(these are exactly the changes of variables and unknown functions in \cite{KeMe08} with $\delta=1$).

Then $w(s,y)$ is defined for $s<0$ and 
\begin{equation}
 \label{C15}
 \supp w\subset \Big\{ |y|\leq 1-e^s\leq 1\Big\}.
\end{equation} 
Furthermore $w$ solves, for $s<0$, the equation
\begin{equation}
 \label{C16}
 \partial_s^2w=\frac{1}{\rho} \Div\left(\rho\nabla w-\rho(y\cdot\nabla w)y\right) -\frac{N(N-2)}{4}w+|w|^{\frac{4}{N-2}}w-2y\cdot \nabla\partial_s w-(N-1)\partial_s w,
\end{equation} 
where $\rho=(1-|y|^2)^{-1/2}$. We start with a few lemmas, that are essentially contained in \cite[Section 6]{KeMe08}. In all the proof, $C$ is a large positive constant that may change from line to line and depends on the constant $M_0$ defined in \eqref{C4}. We denote by $B_1$ the unit ball of $\RR^N$. 
\begin{lemma}
 \label{L:C3}
 For $s<0$, $t=2-e^{-s}$, we have
 \begin{gather}
  \label{C17}
  \int_{B_1}|w(s,y)|^{\frac{2N}{N-2}}\,dy=\int_{\RR^N}|u(t,x)|^{\frac{2N}{N-2}}\,dx,\quad \int_{B_1} |\nabla_y w(s,y)|^2\,dy=\int_{\RR^N}|\nabla_xu(t,x)|^2\,dx\\
  \label{C18}
  \int_{B_1} |w(s,y)|^2\frac{dy}{(1-|y|^2)^2}\leq C,\quad \int_{B_1}|\partial_sw(s,y)|^2\,dy\leq C\\
  \label{C19}
  \int_{B_1}\left(|\nabla_y w(s,y)|^2+|\partial_sw(s,y)|^2+|w(s,y)|^{\frac{2N}{N-2}}+|w(s,y)|^2\right)\frac{dy}{(1-|y|^2)^{1/2}}\leq Ce^{|s|/2}\\
  \label{C20}
  \int_{B_1}\left(|\nabla_y w(s,y)|^2+|\partial_sw(s,y)|^2+|w(s,y)|^{\frac{2N}{N-2}}+|w(s,y)|^2\right)\log\frac{1}{(1-|y|^2)}\,dy\leq C|s|.
 \end{gather}
\end{lemma}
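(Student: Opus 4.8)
The plan is to derive everything from the change of variables \eqref{C14} together with two classical facts: the scale invariance of the $\dot H^1$ and $L^{2N/(N-2)}$ norms, and the Hardy inequality on the ball. Writing $\lambda=2-t=e^{-s}$, so that $u(t,x)=\lambda^{-(N-2)/2}w(s,x/\lambda)$, I would first record the transformation rules for the derivatives,
$$\nabla_x u=\lambda^{-N/2}(\nabla_y w)(s,x/\lambda),\qquad \partial_t u=\lambda^{-N/2}\left(\partial_s w+y\cdot\nabla_y w+\tfrac{N-2}{2}w\right)(s,x/\lambda),$$
which follow by the chain rule from $\tfrac{ds}{dt}=1/\lambda$ and $\tfrac{\partial y}{\partial t}=y/\lambda$. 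The identities \eqref{C17} are then immediate: substituting $x=\lambda y$ in $\int_{\RR^N}|u|^{2N/(N-2)}\,dx$ and in $\int_{\RR^N}|\nabla_x u|^2\,dx$, the Jacobian $\lambda^N$ cancels exactly the power of $\lambda$ carried by $w$, and since $w(s,\cdot)$ is supported in $\{|y|\le 1-e^s\}\subset B_1$ by \eqref{C15} the domains match up.

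For the first bound in \eqref{C18} I would invoke the Hardy inequality for the ball. Since $w(s,\cdot)$ is compactly supported in the open ball, it lies in $H^1_0(B_1)$, and the distance-to-the-boundary Hardy inequality gives $\int_{B_1}\tfrac{w^2}{(1-|y|)^2}\,dy\le C\int_{B_1}|\nabla_y w|^2\,dy$. Because $(1-|y|^2)^{-2}=(1-|y|)^{-2}(1+|y|)^{-2}\le (1-|y|)^{-2}$ on $B_1$, and $\int_{B_1}|\nabla_y w|^2\,dy=\int_{\RR^N}|\nabla_x u|^2\,dx\le M_0$ by \eqref{C17} and \eqref{C4}, this yields $\int_{B_1}\tfrac{w^2}{(1-|y|^2)^2}\,dy\le C$. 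The second bound in \eqref{C18} then follows by solving the transformation rule for $\partial_s w$: from $\partial_s w=\lambda^{N/2}\partial_t u-y\cdot\nabla_y w-\tfrac{N-2}{2}w$ and the triangle inequality in $L^2(B_1)$, I would use that the first term contributes $\int_{\RR^N}(\partial_t u)^2\,dx\le M_0$ after the substitution $x=\lambda y$, that $\|y\cdot\nabla_y w\|_{L^2(B_1)}\le\|\nabla_y w\|_{L^2(B_1)}\le\sqrt{M_0}$ since $|y|\le 1$, and that $\int_{B_1}w^2\le\int_{B_1}\tfrac{w^2}{(1-|y|^2)^2}\le C$.

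The weighted bounds \eqref{C19} and \eqref{C20} are then purely pointwise consequences of the support property \eqref{C15}. On $\{|y|\le 1-e^s\}$ one has $1-|y|^2\ge 1-(1-e^s)^2=e^s(2-e^s)\ge e^s$ for $s<0$, hence $(1-|y|^2)^{-1/2}\le e^{|s|/2}$ and $\log\tfrac{1}{1-|y|^2}\le |s|$ on the support of $w$. Pulling these factors out of the integrals, it remains to bound $\int_{B_1}\big(|\nabla_y w|^2+|\partial_s w|^2+|w|^{2N/(N-2)}+|w|^2\big)\,dy$ by a constant: the first and third terms equal the $\dot H^1$ and $L^{2N/(N-2)}$ norms of $u$, controlled by $M_0$ via \eqref{C17} and Sobolev, the second by \eqref{C18}, and the fourth as just noted. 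This gives \eqref{C19} and \eqref{C20} with the claimed rates $e^{|s|/2}$ and $|s|$. The only genuinely non-elementary step is the Hardy inequality underlying the first estimate in \eqref{C18}: it is what converts the bulk energy bound $M_0$ into control of $w$ against a weight that is singular on $\partial B_1$, which in physical variables encodes the behaviour near the light cone; everything else reduces to the change of variables and pointwise bounds on the weights.
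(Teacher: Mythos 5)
Your proposal is correct and follows essentially the same route as the paper: \eqref{C17} by the scaling-compatible change of variables, the first bound in \eqref{C18} via the distance-to-the-boundary Hardy inequality (the paper cites Brezis--Marcus for exactly this), the second via the identity $\partial_s w=-\tfrac{N-2}{2}w-y\cdot\nabla_y w+e^{-Ns/2}\partial_t u$ together with \eqref{C17} and the first bound, and \eqref{C19}--\eqref{C20} by pulling the weights out pointwise using $1-|y|^2\geq e^{s}$ on $\supp w$. The computations (signs in the chain rule, Jacobian cancellations, and the weight estimates) all check out.
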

\begin{proof}
 The identities \eqref{C17} follow directly from the change of variable $y=\frac{x}{2-t}$.
 
 The first inequality in \eqref{C18} is a consequence of the condition \eqref{C15} on the support of $w$, of \eqref{C17} and of Hardy's inequality (see e.g. \cite{BrMa97}).
 
 To obtain the second bound in \eqref{C18}, we write
 \begin{equation}
  \label{C20'}
  \partial_sw=-\frac{N-2}{2}w-y\cdot\nabla w+e^{-\frac N2s}\partial_tu(2-e^{-s},e^{-s}y),
 \end{equation} 
 and the desired bound follows from \eqref{C17}, the first inequality in \eqref{C18} and the identity
 $$\int\left(e^{-\frac N2s}\partial_tu(2-e^{-s},e^{-s}y)\right)^2\,dy=\int (\partial_tu(t,x))^2\,dx.$$ 

 The estimates \eqref{C19} and \eqref{C20} follow from \eqref{C17} and \eqref{C18} and the fact that, on the support of $w$,
 $1-|y|^2\geq 1-|y|\geq e^{s}$.
 \end{proof}
\begin{lemma}
 \label{L:C4}
 Let for $s<0$, $y\in B_1$,
 $$\te(s,y)=\frac{1}{2}\left((\partial_sw)^2+|\nabla w|^2-(y\cdot \nabla w)^2\right)+\frac{N(N-2)}{8}w^2-\frac{N-2}{2N}|w|^{\frac{2N}{N-2}},$$
 and 
 $$\tE(s)=\int_{B_1}\frac{\te(s,y)}{(1-|y|^2)^{1/2}}\,dy.$$
 Then, if $s_1<s_2<0$,
 \begin{equation}
  \label{C21}
  \tE(s_2)-\tE(s_1)=\int_{s_1}^{s_2} \int_{B_1} \frac{(\partial_sw)^2}{(1-|y|^2)^{3/2}}\,dy\,ds
  \end{equation}
  \begin{multline}
  \label{C22}
  \frac{1}{2}\int_{B_1}\Big(\partial_sw\,w-\frac{1+N}{2}w^2\Big)\frac{dy}{(1-|y|^2)^{1/2}}\Bigg|_{s_1}^{s_2}\\
  =
  -\int_{s_1}^{s_2} \tE(s)\,ds+\frac 1N\int_{s_1}^{s_2} \int_{B_1} \frac{|w|^{\frac{2N}{N-2}}}{(1-|y|^2)^{1/2}}\,dy\,ds\\
  \quad +\int_{s_1}^{s_2} \int_{B_1} \left((\partial_s w)^2+\partial_sw y\cdot\nabla w+\frac{|y|^2\partial_s w\,w}{1-|y|^2}\right)\frac{dy}{(1-|y|^2)^{1/2}}\,ds
  \end{multline}
  \begin{multline}
  \label{C23}
  \int_{B_1} \te(s,y)(-\log(1-|y|^2))\,dy\Bigg|_{s_1}^{s_2} +\int_{s_1}^{s_2}\int_{B_1} \left(2+\log(1-|y|^2)\right)y\cdot\nabla w\partial_sw\,dy\,ds\\-\int_{s_1}^{s_2} \int_{B_1} \log(1-|y|^2)(\partial_sw)^2\,dy\,ds
  -2\int_{s_1}^{s_2} \int_{B_1} (\partial_sw)^2\,dy\,ds=-2\int_{s_1}^{s_2}\int_{B_1}\frac{(\partial_sw)^2}{1-|y|^2}\,dy\,ds.
 \end{multline}
\end{lemma}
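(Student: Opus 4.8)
All three identities are weighted multiplier identities for \eqref{C16}, obtained by testing the equation against a suitable multiple of $\partial_s w$ or of $w$ and integrating by parts in $y$ over $B_1$. For every fixed $s<0$ the support condition \eqref{C15} puts $w(s,\cdot)$ in a compact subset of $B_1$, so all boundary terms at $\partial B_1$ vanish and the singular weights $\rho=(1-|y|^2)^{-1/2}$ and $-\log(1-|y|^2)$ are harmless on $\supp w$; finiteness of the integrals and admissibility of differentiation in $s$ come from Lemma~\ref{L:C3}, in particular \eqref{C19} and \eqref{C20}. It is convenient to rewrite \eqref{C16} in the divergence form $\rho\,\partial_s^2 w=\Div(\rho\nabla w-\rho(y\cdot\nabla w)y)-\frac{N(N-2)}{4}\rho w+\rho|w|^{\frac{4}{N-2}}w-2\rho\,y\cdot\nabla\partial_s w-(N-1)\rho\,\partial_s w$, and to record the weight computations $\Div(\rho y)=N\rho+\rho^3|y|^2$ and $\Div\big(-\log(1-|y|^2)\,y\big)=-N\log(1-|y|^2)+\frac{2|y|^2}{1-|y|^2}$, together with $\rho+\rho^3|y|^2=\rho^3$ and $\frac{|y|^2}{1-|y|^2}=\frac{1}{1-|y|^2}-1$.

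For \eqref{C21} I would multiply the divergence form by $\partial_s w$. The first four terms on the right each become a total $s$-derivative after one integration by parts and assemble into $\frac{d}{ds}\tE(s)$, with $\te$ exactly as defined. Testing $-2\rho\,y\cdot\nabla\partial_s w-(N-1)\rho\,\partial_s w$ against $\partial_s w$ and integrating $\rho\,y\cdot\nabla\big((\partial_s w)^2\big)$ by parts gives $\int_{B_1}\big(\Div(\rho y)-(N-1)\rho\big)(\partial_s w)^2=\int_{B_1}(\rho+\rho^3|y|^2)(\partial_s w)^2$, and $\rho+\rho^3|y|^2=\rho^3$ produces the weight $(1-|y|^2)^{-3/2}$; integrating in $s$ yields \eqref{C21}. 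For \eqref{C22} the multiplier is $w$ applied to the divergence form. Writing $\int_{B_1}\rho w\,\partial_s^2 w=\frac{d}{ds}\int_{B_1}\rho w\,\partial_s w-\int_{B_1}\rho(\partial_s w)^2$, integrating the divergence term by parts to produce $-\int_{B_1}\rho(|\nabla w|^2-(y\cdot\nabla w)^2)$, and then using the definition of $\tE$ to trade these gradient terms for $\tE$ together with the potential and nonlinear terms, one reduces to $\frac1N\int_{B_1}\rho|w|^{\frac{2N}{N-2}}$ plus first-order terms. Integrating $-2\int_{B_1}\rho w\,(y\cdot\nabla\partial_s w)$ by parts via $\Div(\rho y)$ reproduces $\int_{B_1}\rho\,\partial_s w\,(y\cdot\nabla w)$ and $\int_{B_1}\rho\frac{|y|^2\partial_s w\,w}{1-|y|^2}$, and recognising the left side as $\frac{d}{ds}$ of $\frac12\int_{B_1}\rho(\partial_s w\,w-\frac{1+N}{2}w^2)$ gives \eqref{C22}.

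For \eqref{C23} I would instead multiply \eqref{C16} directly by $-\log(1-|y|^2)\,\partial_s w$, so that the kinetic, gradient, potential and nonlinear terms combine into $\frac{d}{ds}\int_{B_1}\te\,(-\log(1-|y|^2))$. Integrating the term $\frac1\rho\Div(\rho\nabla w-\rho(y\cdot\nabla w)y)$ by parts, the gradient of the weight $-\log(1-|y|^2)/\rho$ contributes a radial factor $2+\log(1-|y|^2)$, which is the source of the cross term $\int_{B_1}(2+\log(1-|y|^2))\,y\cdot\nabla w\,\partial_s w$. The first-order terms, treated with $\Div\big(-\log(1-|y|^2)\,y\big)$ and the splitting $\frac{|y|^2}{1-|y|^2}=\frac{1}{1-|y|^2}-1$, supply the $\log(1-|y|^2)(\partial_s w)^2$, the $(\partial_s w)^2$ and the $\frac{(\partial_s w)^2}{1-|y|^2}$ contributions, so that after rearranging one obtains \eqref{C23}.

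The only genuinely delicate point I foresee is the rigorous justification of these formal computations: $\rho$ and $-\log(1-|y|^2)$ are singular at $\partial B_1$ while $w$ has only finite energy, so strictly speaking one should first prove the identities for a regularized problem (truncating near $|y|=1$ and mollifying in $y$ and $s$) and then pass to the limit, using \eqref{C15} to discard boundary contributions and \eqref{C18}–\eqref{C20} to control the weighted integrals and to justify differentiation under the integral sign and the integration by parts in $s$. Since all three identities are essentially already in \cite[Section~6]{KeMe08}, I would follow the regularization there; the only new feature is that the computations must now be valid for all $s<0$, not merely near $s=0$.
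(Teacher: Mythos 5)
Your multiplier scheme is exactly what the paper intends (its ``proof'' is only a pointer to \cite{MeZa03} and \cite{KeMe08}), and for \eqref{C21} and \eqref{C22} your computations are correct: testing the divergence form of \eqref{C16} with $\rho\,\partial_sw$ gives \eqref{C21} through $\rho+\rho^3|y|^2=\rho^3$, and testing with $\rho\,w$ gives \eqref{C22}, the coefficient $\frac{1+N}{2}$ arising from $\Div(\rho y)=N\rho+\rho^3|y|^2$ combined with the damping term $-(N-1)\rho\,\partial_sw$, as you say. One remark: the regularization you worry about at the end is not really needed for fixed $s_1<s_2<0$, since \eqref{C15} keeps $\supp w(s)\subset\{|y|\leq 1-e^{s_1}\}$ for all $s\in[s_1,s_2]$, so the weights $\rho$ and $-\log(1-|y|^2)$ are smooth and bounded on the set where anything is nonzero; only the standard approximation of $w$ by regular solutions is required.

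The genuine problem is \eqref{C23}: your computation, carried out to the end, does not produce \eqref{C23} as printed, and no multiplier computation can, because \eqref{C23} as printed is not an identity. For a radial weight $\phi=\phi(|y|^2)$, with $\phi'$ the derivative in the variable $|y|^2$, testing \eqref{C16} with $\phi\,\partial_sw$ gives
\begin{equation*}
\frac{d}{ds}\int_{B_1}\phi\,\te\,dy=\int_{B_1}\bigl(\phi-2\phi'(1-|y|^2)\bigr)\,y\cdot\nabla w\,\partial_sw\,dy+\int_{B_1}\bigl(\phi+2\phi'\,|y|^2\bigr)(\partial_sw)^2\,dy,
\end{equation*}
with no boundary terms; for $\phi=\rho$ the first bracket vanishes and the second equals $\rho^3$, which reproduces \eqref{C21} and validates the bookkeeping. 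Taking $\phi=-\log(1-|y|^2)$ (forced by the first term of \eqref{C23}), the two brackets are $-\bigl(2+\log(1-|y|^2)\bigr)$ and $-\log(1-|y|^2)+\frac{2|y|^2}{1-|y|^2}$, both of which have a \emph{fixed} sign structure; integrating in $s$ and rearranging yields
\begin{multline*}
\int_{B_1}\te(s,y)\,(-\log(1-|y|^2))\,dy\Bigg|_{s_1}^{s_2}+\int_{s_1}^{s_2}\int_{B_1}\bigl(2+\log(1-|y|^2)\bigr)\,y\cdot\nabla w\,\partial_sw\,dy\,ds\\
+\int_{s_1}^{s_2}\int_{B_1}\log(1-|y|^2)\,(\partial_sw)^2\,dy\,ds+2\int_{s_1}^{s_2}\int_{B_1}(\partial_sw)^2\,dy\,ds
=2\int_{s_1}^{s_2}\int_{B_1}\frac{(\partial_sw)^2}{1-|y|^2}\,dy\,ds,
\end{multline*}
which is \eqref{C23} with the signs of the last three terms reversed. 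The two versions are incompatible unless $\partial_sw\equiv 0$: subtracting them gives $\int_{s_1}^{s_2}\int_{B_1}\bigl(\frac{2}{1-|y|^2}-2-\log(1-|y|^2)\bigr)(\partial_sw)^2\,dy\,ds=0$, whose integrand is nonnegative and vanishes only at $y=0$. So \eqref{C23} carries a sign error (most plausibly a transcription slip from the identity in \cite{KeMe08}), and your final claim that ``after rearranging one obtains \eqref{C23}'' cannot be literally correct. The discrepancy is harmless downstream: in the proof of Lemma \ref{L:C6} the corrected identity is used in exactly the same way, since the $\log(1-|y|^2)(\partial_sw)^2$ term still has the favorable sign there, the term $\int\int(\partial_sw)^2$ is still $O(|\sigma|)$ by \eqref{C18}, and the cross term is absorbed by Cauchy--Schwarz as before. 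But your write-up should derive and state the corrected identity (or explicitly flag the typo) rather than assert that the computation lands on the printed one.
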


The proof is by direct computations. See \cite{MeZa03} for \eqref{C21} and \eqref{C22}. The identity \eqref{C23} is the first identity in the proof of Lemma 6.4 in \cite{KeMe08}.
\begin{lemma}
 \label{L:C5}
 $$\lim_{s\overset{<}{\longrightarrow} 0}\tE(s)=E(\vu_0).$$
\end{lemma}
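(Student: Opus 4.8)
The plan is to show that as $s\to 0^-$, the self-similar energy $\tE(s)$ converges to the conserved energy $E(\vu_0)$. The natural strategy is to compare $\tE(s)$ with $E(\vu(t))=E(\vu_0)$ directly through the change of variables \eqref{C13}--\eqref{C14}, and exploit the fact that $s\to 0^-$ corresponds to $t\to 1^- = T_+(u)^-$, where the weight $(1-|y|^2)^{-1/2}$ and the support constraint \eqref{C15} conspire to make $w$ concentrate in the region where $|y|$ is small. First I would write out $\tE(s)$ explicitly in terms of $u$ using \eqref{C17} and \eqref{C20'}, expressing each term (kinetic, gradient, the shear term $(y\cdot\nabla w)^2$, the mass term, and the potential term) as an integral against $u$ and $\partial_t u$ over $\RR^N$, with the Jacobian and the weight $\rho=(1-|y|^2)^{-1/2}$ pulled back.

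The key computation is to recognize that when $\te$ is integrated against $(1-|y|^2)^{-1/2}\,dy$, after undoing the substitution $y=x/(2-t)$, the weight becomes $(1 - |x|^2/(2-t)^2)^{-1/2}$, which tends to $1$ pointwise as $t\to 1^-$ on the support $\{|x|\le 1-t\}$, since there $|x|/(2-t)\le (1-t)/(2-t)\to 0$. Thus I expect the weight to be essentially harmless in the limit: on the shrinking support, $(1-|y|^2)^{1/2}$ is uniformly close to $1$. Next I would handle the $\partial_s w$ terms via \eqref{C20'}: substituting that expression, the cross terms involving $w$, $y\cdot\nabla w$, and $e^{-Ns/2}\partial_t u$ must be expanded, and I would argue that the genuinely self-similar correction terms (those involving $-\frac{N-2}{2}w - y\cdot\nabla w$ rather than the pulled-back $\partial_t u$) vanish in the limit because the factor $y=x/(2-t)$ is supported where $|y|\le 1-e^s\to 0$. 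After these reductions, $\tE(s)$ should collapse term-by-term to $\frac12\int|\nabla u|^2 + \frac12\int(\partial_t u)^2 - \frac{N-2}{2N}\int|u|^{\frac{2N}{N-2}} = E(\vu_0)$, using the time-independence of the energy \eqref{defE}.

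The main obstacle will be justifying the passage to the limit in the potential term $\frac{N-2}{2N}\int |w|^{\frac{2N}{N-2}}(1-|y|^2)^{-1/2}\,dy$ and in the gradient/shear combination, since the weight degenerates at $|y|=1$ and a priori one only controls these quantities with the mild growth $Ce^{|s|/2}$ from \eqref{C19}. I would therefore need to exploit the support localization \eqref{C15} more quantitatively: on $\supp w$ one has $1-|y|^2 \ge e^s$, but crucially $|y|\to 0$ uniformly as $s\to 0^-$, so in fact on the effective support the weight stays near $1$ and the degeneration never activates. Concretely, I would split the integrals into the region $\{|y|\le 1-e^s\}$ where $w$ actually lives and show the weight there is $1 + O(e^s)$ uniformly, while using the uniform energy bound \eqref{C4} (via \eqref{C17}) to control the integrands. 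The shear term $(y\cdot\nabla w)^2$ requires special care: it must be shown to vanish, which again follows since $|y|\to 0$ on the support, so $|y\cdot\nabla w|^2 \le |y|^2|\nabla w|^2 \to 0$ after integration. Assembling these estimates yields the claimed limit $\tE(s)\to E(\vu_0)$.
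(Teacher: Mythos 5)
Your proposal is correct and follows essentially the same route as the paper's proof: use that on the support $\{|y|\le 1-e^s\}$ the weight $(1-|y|^2)^{-1/2}$ tends to $1$ uniformly as $s\to 0^-$, kill the mass, shear, and \eqref{C20'}-correction terms by the shrinking of the support, and conclude from \eqref{C17} and conservation of the energy. Two small points: the decay of $\int w^2$ requires Hardy's inequality (or H\"older--Sobolev) combined with the shrinking support and the bound \eqref{C4} --- smallness of the support alone does not make an $L^2$ norm small, and this is exactly how the paper obtains its estimate \eqref{C24} --- and your weight estimate should read $1+O\bigl((1-e^s)^2\bigr)$ rather than $1+O(e^s)$, since $e^s\to 1$ as $s\to 0^-$.
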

\begin{proof}
This is contained in Proposition 6.2 (iii) of \cite{KeMe08}. We give the proof for completeness.
Since $|y|\leq 1-e^s$ on the support of $w$, we have, by \eqref{C17} and Hardy's inequality $\int \frac{1}{|y|^2}|f|^2\leq \int|\nabla f|^2$,
\begin{equation}
 \label{C24}
 \lim_{s\overset{<}{\longrightarrow} 0} \int (y\cdot \nabla w)^2\frac{dy}{(1-|y|^2)^{1/2}}+\int |w|^2\frac{dy}{(1-|y|^2)^{1/2}}=0
\end{equation} 
and 
\begin{equation}
 \label{C25}
 \lim_{s\overset{<}{\longrightarrow} 0} \int \left|\frac{|\nabla w|^2}{(1-|y|^2)^{1/2}}-|\nabla w|^2\right|\,dy=
 \lim_{s\overset{<}{\longrightarrow} 0} \int \left|\frac{|w|^{\frac{2N}{N-2}}}{(1-|y|^2)^{1/2}}-|w|^{\frac{2N}{N-2}}\right|\,dy=0,
 \end{equation} 
 which implies
 \begin{multline}
  \label{C26}
   \lim_{s\overset{<}{\longrightarrow} 0} \Bigg[\int \left(\frac{|\nabla w|^2}{2}-\frac{N-2}{2N} |w|^{\frac{2N}{N-2}}\right)\frac{dy}{(1-|y|^2)^{1/2}}\\
   -\int \left(\frac{|\nabla u(2-e^{-s},x)|^2}{2}-\frac{N-2}{2N} |u(2-e^{-s},x)|^{\frac{2N}{N-2}}\right)\,dx    \Bigg]=0.
 \end{multline} 
Furthermore (using \eqref{C20'} and \eqref{C24}),
\begin{equation}
 \label{C27} 
   \lim_{s\overset{<}{\longrightarrow} 0} \Bigg[\int \frac{(\partial_sw)^2}{2}\frac{dy}{(1-|y|^2)^{1/2}}-\int \frac{(\partial_tu(2-e^{-s},x))^2}{2}\,dx\Bigg]=0.
\end{equation} 
Combining \eqref{C24}, \eqref{C26} and \eqref{C27} we obtain the conclusion of the lemma.
 \end{proof}
 The next lemma is the analog of Lemma 6.4 of \cite{KeMe08}:
 \begin{lemma}
 \label{L:C6}
 $$\forall \sigma<0,\quad \int_{\sigma}^0\int_{B_1}\frac{(\partial_sw)^2}{1-|y|^2}\,dy\,ds\leq C|\sigma|.$$
\end{lemma}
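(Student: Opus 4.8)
The plan is to read off the estimate from identity \eqref{C23}, whose right-hand side is precisely $-2\int_{s_1}^{s_2}\int_{B_1}\frac{(\partial_sw)^2}{1-|y|^2}\,dy\,ds$. I would apply it with $s_1=\sigma$ and an arbitrary $s_2\in(\sigma,0)$, isolate the quantity $F_{s_2}:=\int_{\sigma}^{s_2}\int_{B_1}\frac{(\partial_sw)^2}{1-|y|^2}\,dy\,ds$ on one side, and bound the remaining four terms by $C|\sigma|$ (up to a piece to be absorbed). Since $1-|y|^2\geq e^s$ on the support of $w$, one has $\int_{B_1}\frac{(\partial_sw)^2}{1-|y|^2}\,dy\leq e^{|s|}\int_{B_1}(\partial_sw)^2\,dy\leq Ce^{|s|}$ by \eqref{C18}, so $F_{s_2}<\infty$ for every fixed $s_2<0$; this a priori finiteness legitimizes the absorption, and the lemma then follows by letting $s_2\to 0^-$ (monotone convergence) once $F_{s_2}\leq C|\sigma|$ is established uniformly in $s_2$.

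Three of the four terms are immediate. The boundary term $\int_{B_1}\te\,(-\log(1-|y|^2))\,dy\big|_{\sigma}^{s_2}$ is $\leq C|\sigma|+C|s_2|\leq C|\sigma|$, since each summand of $|\te|$ (with $\tfrac12(y\cdot\nabla w)^2\leq\tfrac12|\nabla w|^2$) is controlled by the log-weighted energy in \eqref{C20}. The term $-2\int_\sigma^{s_2}\int_{B_1}(\partial_sw)^2$ is $\leq C|s_2-\sigma|\leq C|\sigma|$ by \eqref{C18}. The term $-\int_\sigma^{s_2}\int_{B_1}\log(1-|y|^2)(\partial_sw)^2$ has a favorable sign: because $\log(1-|y|^2)\leq 0$ on $B_1$, once $F_{s_2}$ is isolated this contribution works in our favor and can simply be discarded.

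The one genuinely delicate term — and the main obstacle — is the cross term $\int_\sigma^{s_2}\int_{B_1}(2+\log(1-|y|^2))\,y\cdot\nabla w\,\partial_sw$. A naive Cauchy–Schwarz against the log-weight of \eqref{C20} only produces a bound of order $C\sigma^2$, which is useless in the regime $\sigma\to-\infty$ that we care about. The resolution is to exploit the singular weight instead of fighting it: using $|y\cdot\nabla w|\leq|\nabla w|$ and $|2+\log(1-|y|^2)|\leq 2+\log\frac1{1-|y|^2}$, the bounded part $\int_\sigma^{s_2}\int_{B_1}2|\nabla w||\partial_sw|\leq\int_\sigma^{s_2}\int_{B_1}(|\nabla w|^2+(\partial_sw)^2)\leq C|\sigma|$ by \eqref{C17}, \eqref{C4} and \eqref{C18}, while for the singular part I would use Young's inequality
$$\log\tfrac1{1-|y|^2}\,|\nabla w|\,|\partial_sw|\leq\tfrac12\Big(\log\tfrac1{1-|y|^2}\Big)^2(1-|y|^2)\,|\nabla w|^2+\tfrac12\,\frac{(\partial_sw)^2}{1-|y|^2}.$$
The first term is $\leq C|\nabla w|^2$ because $x(\log\tfrac1x)^2$ is bounded on $(0,1]$, hence integrates to $\leq C|\sigma|$, and the second term is exactly $\tfrac12F_{s_2}$. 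Altogether the four terms yield $2F_{s_2}\leq C|\sigma|+\tfrac12F_{s_2}$, so that $F_{s_2}\leq C|\sigma|$ uniformly in $s_2$, and letting $s_2\to0^-$ finishes the proof.
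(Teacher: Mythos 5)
Your proof is correct and follows essentially the same route as the paper: identity \eqref{C23} with $s_1=\sigma$, the bounds \eqref{C17}, \eqref{C18}, \eqref{C20} for the boundary and lower-order terms, discarding the good-sign term $-\iint\log(1-|y|^2)(\partial_sw)^2$, and absorbing the cross term using the boundedness of $x\log^2x$ on $(0,1]$. The only cosmetic differences are that you absorb via a pointwise Young inequality after splitting the weight, where the paper uses an integrated Cauchy--Schwarz (with the bounded factor $(1-|y|^2)(2+\log(1-|y|^2))^2$) followed by $ab\leq\frac{\eps}{2}a^2+\frac{1}{2\eps}b^2$, and that you make explicit the a priori finiteness of the weighted integral (via $1-|y|^2\geq e^s$ on $\supp w$) that legitimizes the absorption, a point the paper leaves implicit.
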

\begin{proof}
 We use the identity \eqref{C23} with $s_1=\sigma$, $s_2\overset{<}{\longrightarrow} 0$.
 
In the left-hand side of \eqref{C23}, the terms with $\tilde{e}$ are bounded by $C|\sigma|$ according to \eqref{C20} in Lemma \ref{L:C3}. The term with $\log(1-|y|^2)(\partial_sw)^2$ has the good sign and can be ignored. The term with $\iint (\partial_sw)^2$ is bounded by $C|\sigma|$ according to \eqref{C18}. Furthermore,
\begin{multline*}
 \left|\int_{\sigma}^0 \int_{B_1}(2+\log(1-|y|^2))y\cdot\nabla w\partial_s w\,dy\,d\sigma\right|\\
 \leq \sqrt{\int_{\sigma}^0 \int_{B_1} (\partial_sw)^2\frac{dy}{1-|y|^2}ds}\sqrt{\int_{\sigma}^0 \int_{B_1} (1-|y|^2)\left(2+\log(1-|y|^2)\right)^2\,|\nabla w|^2\,dy\,ds}\\
 \leq C\sqrt{\int_{\sigma}^0\int_{B_1}(\partial_sw)^2\frac{dy}{1-|y|^2}\,d\sigma}\sqrt{|\sigma|}, 
\end{multline*}
by \eqref{C17}. This term can be absorbed by the inequality $ab\leq \frac{\eps}{2}a^2+\frac{1}{2\eps}b^2$.
\end{proof}

\begin{proof}[Proof of Proposition \ref{P:critical}]
The proof is divided in 4 steps.

\EMPH{Step 1} We prove  (see Lemma 6.5 of \cite{KeMe08})
\begin{equation}
 \label{C28}
 \forall \sigma<0,\quad \int_{\sigma}^0 \int_{B_1} \frac{|w(\sigma,y)|^{\frac{2N}{N-2}}}{(1-|y|^2)^{1/2}}\,dy\,ds\leq C|\sigma|.
\end{equation} 
We use the identity \eqref{C22} with $s_1=\sigma$, $s_2\overset{<}{\longrightarrow}0$. We have, for $j=1,2$,
\begin{equation*}
 \left|\int_{B_1} (\partial_s w\,w)(s_j,y)\frac{dy}{(1-|y|^2)^{1/2}}\right|\leq \sqrt{\int_{B_1}(\partial_sw)^2(s_j,y)\,dy}\sqrt{\int_{B_1}|w|^2(s_j,y)\frac{dy}{1-|y|^2}}\leq C
\end{equation*}
by \eqref{C18}. Using again \eqref{C18},
\begin{equation*}
 \int_{B_1}w^2(s_j,y)\frac{dy}{(1-|y|^2)^{1/2}}\leq C.
\end{equation*}
By Lemma \ref{L:C5} and since $\tE$ is nondecreasing: 
\begin{equation*}
 \int_{\sigma}^0\tE(s)\,ds\leq |\sigma|E(\vu_0)
\end{equation*}
In the last line of \eqref{C22}, the first term is $\geq 0$ and can be dropped. Moreover,
$$\left|\int_{\sigma}^0\int_{B_1} \partial_swy\cdot\nabla w\frac{dy}{(1-|y|^2)^{1/2}}ds\right|\leq C|\sigma|$$
by Cauchy-Schwarz, \eqref{C17} and Lemma \ref{L:C6}. Also
$$\left|\int_{\sigma}^0 \int_{B_1}\frac{|y|^2\partial_s w\,w}{(1-|y|^2)^{1/2}}dyds\right|\leq C|\sigma|,$$
by Cauchy-Schwarz and \eqref{C18}. Combining the preceding estimates, we obtain the conclusion \eqref{C28} of Step 1.

\EMPH{Step 2} We prove
\begin{gather}
\label{C29}
 \forall \sigma<0,\quad -C\leq \tE(\sigma)\leq E(\vu_0)\\
 \label{C30}
 \int_{-\infty}^0 \int_{B_1} \frac{(\partial_sw)^2}{(1-|y|^2)^{3/2}}dy\,ds<\infty.
\end{gather}
The inequality \eqref{C30} follows immediately from \eqref{C29} and the identity \eqref{C21}.

The bound from above in \eqref{C29} follows again from \eqref{C21} and Lemma \ref{L:C5}. 

By Step 1, there exists a sequence $s_n\to-\infty$ such that 
$$\forall n,\quad \int_{B_1} \frac{|w(s_n,y)|^{\frac{2N}{N-2}}}{(1-|y|^2)^{1/2}}\,dy\leq C,$$
where the constant $C$ is the same than in \eqref{C28}. Using the definition of $\tE$, we deduce 
$$\forall n,\quad \tE(s_n)\geq -C\frac{N-2}{2N}$$
and the conclusion follows using the monotonicity of the energy.

\EMPH{Step 3} We prove
\begin{equation}
 \label{C31}
 \forall \sigma <0,\quad \int_{\sigma}^0 \left(\int_{B_1}\frac{|w(s,y)|^{\frac{2N}{N-2}}}{N(1-|y|^2)^{1/2}}\,dy-\tE(s)\right)\,ds\leq C|\sigma|^{1/2}.
\end{equation} 
If $-1<\sigma <0$, \eqref{C31} follows immediately from \eqref{C28} and the boundedness of $\tE$.

We next assume $\sigma\leq -1$. We use again \eqref{C22} with $s_1=\sigma$, $s_2\overset{<}{\longrightarrow}0$.

The terms on the first line of \eqref{C22} are bounded, according to \eqref{C18} in Lemma \ref{L:C3}.

The second line of \eqref{C22} is exactly the left-hand side of \eqref{C31}. We are left with bounding the third line of \eqref{C22}. We have:
$$\int_{\sigma}^0 \int_{B_1} \frac{(\partial_sw)^2}{(1-|y|^2)^{1/2}}dyds\leq C$$
by Step 2.
\begin{equation*}
\left|\int_{\sigma}^0\int_{B_1} \frac{\partial_swy\cdot\nabla w}{(1-|y|^2)^{1/2}}dyds\right| \leq \sqrt{\int_{\sigma}^0 \int_{B_1} \frac{(\partial_sw)^2}{1-|y|^2}dyds}\sqrt{\int_{\sigma}^0\int_{B_1}|y\cdot\nabla w|^2\,dy\,ds}\leq C\sqrt{|\sigma|}
\end{equation*}
by Step 2 and \eqref{C17}.
\begin{multline*}
 \left|\int_{\sigma}^0\int_{B_1} \frac{\partial_s w\,w|y|^2}{(1-|y|^2)^{3/2}}dy\,ds\right|\\ \leq \sqrt{\int_{\sigma}^0\int_{B_1} \frac{(\partial_sw)^2}{(1-|y|^2)^{3/2}}\,dy\,ds}\sqrt{\int_{\sigma}^0\int_{B_1} \frac{w^2}{(1-|y|^2)^{3/2}}\,dy\,ds}\leq C\sqrt{|\sigma|}
\end{multline*}
by Step 2 and \eqref{C18}. This concludes Step 3.

\EMPH{Step 4: conclusion of the proof} We first prove by contradiction that $\tE(-1)<E(\vu_0)$. If not, we see from the identity \eqref{C21} and Lemma \ref{L:C5} that $\int|\partial_sw(s,y)|^2\,dy=0$ for almost every $s\in (-1,0)$. Thus $w$ is independent of $s\in(-1,0)$, and since $\supp w\subset\{|y|\leq 1-e^s\}$, $w=0$ for $s\in (-1,0)$. Thus $u\equiv 0$, contradicting the assumption $T_+(u)=1$. By Step 3, we obtain, for $\sigma\leq -2$,
$$\int_{\sigma}^{-1} \left(\int_{B_1} \frac{|w(s,y)|^{\frac{2N}{N-2}}}{N(1-|y|^2)^{1/2}}\,dy-\tE(-1)\right)\,ds \leq C|\sigma|^{1/2}.$$
Going back to the variables $(t,x)$, we deduce, for $\sigma \leq -2$,
$$\int_{2-e^{-\sigma}}^{2-e} \left(\int_{\RR^N} \frac{|u(t,x)|^{\frac{2N}{N-2}}}{N\left(1-\frac{|x|^2}{(2-t)^2}\right)^{1/2}}\,dx-\tE(-1)\right)\,\frac{dt}{2-t} \leq C|\sigma|^{1/2}.$$
Dropping the factor 
$\frac{1}{\left(1-\frac{|x|^2}{(2-t)^2}\right)^{1/2}}\geq 1$, we obtain
$$\int_{T}^{2-e} \left(\int_{\RR^N} \frac{1}{N}|u(t,x)|^{\frac{2N}{N-2}}\,dx-\tE(-1)\right)\,\frac{dt}{2-t} \leq C|\log(2-T)|^{1/2},$$
where $T=2-e^{-\sigma}\leq 2-e^2$. Dividing by $\frac{1}{\log(2-T)}$ and using that $\tE(-1)<E(\vu_0)$ we obtain the desired conclusion \eqref{C5}.
\end{proof}

\begin{proof}[Proof of Proposition \ref{P:lambda}]
The proof is close to the end of Section 6 of \cite{KeMe08}. Using the same self-similar change of variables as in the preceding proof, we construct, assuming that the conclusion of Proposition \ref{P:lambda} is not true, a nonzero solution to a singular elliptic equation, yielding a contradiction with a result of \cite{KeMe08}.

We can assume, without loss of generality, that $T_+(u)=1$ and thus $|x|\leq 1-t$ on the support of $u$. By finite speed of propagation
\begin{equation*}
 \limsup_{t\to-\infty}\frac{\lambda(t)}{|t|}<\infty.
\end{equation*} 
Furthermore, since $P(\vu_0)=0$, we have
\begin{equation}
 \label{L1}
\lim_{t\to -\infty}\frac{x(t)}{t}=0.
\end{equation} 
(see \cite{KeMe08} for the detailed proofs of these properties).

We argue by contradiction, assuming that there exists a sequence of times $\tau_n\to -\infty$ such that
\begin{equation}
 \label{L2}
\lim_{n\to \infty}\frac{\lambda(\tau_n)}{|\tau_n|}=\ell \in (0,+\infty).
\end{equation} 
The solution $u$ satisfies the assumptions of Proposition \ref{P:critical}.  We introduce as above the self-similar variables $y$ and $s$ (see \eqref{C13}) and define $w$ by \eqref{C14}.

\EMPH{Step 1: compactness} Let $\sigma_n=-\log(2-\tau_n)$. Let 
$$w_n(s)=w(\sigma_n+s),\quad s<-\sigma_n.$$
In this step we prove that there exists (after extraction of a subsequence) a small $s_0>0$ and $w_*\in C^0([0,s_0],\dot{H}^1)$ such that $\partial_sw_*\in C^0([0,s_0],L^2)$ and 
\begin{equation}
 \label{L3}
\lim_{n\to\infty}\sup_{0\leq s\leq s_0}\big\|\big(w_n(s)-w_*(s),\partial_sw_n(s)-\partial_sw_*(s)\big)\big\|_{\dot{H}^1\times L^2}=0.
\end{equation} 
Indeed, let 
\begin{align*}
v_n(\tau,z)&=(2-\tau_n)^{\frac{N-2}{2}}u\big(\tau_n+(2-\tau_n)\tau, (2-\tau_n)z\big)\\
u_n(\tau,z)&=\lambda(\tau_n)^{\frac{N-2}{2}}u\big(\tau_n+\lambda(\tau_n)\tau,\lambda(\tau_n)z+x(\tau_n)\big).
\end{align*} 
By the precompactness of $K$, $(u_n(0),\partial_{\tau}u_n(0))$ has (after extraction of a subsequence) a limit in $\dot{H}^1\times L^2$ as $n$ goes to infinity. Noting that 
$$v_n(\tau,z)=\left( \frac{2-\tau_n}{\lambda(\tau_n)} \right)^{\frac{N-2}{2}} u_n\left( \frac{2-\tau_n}{\lambda(\tau_n)}\tau,\frac{2-\tau_n}{\lambda(\tau_n)}z-\frac{x(\tau_n)}{\lambda(\tau_n)} \right),$$
and combining with \eqref{L1} and \eqref{L2}, we see that $(v_n(0),\partial_{\tau}v_n(0))$ has a limit in $\dot{H}^1\times L^2$ as $n$ goes to infinity. We denote by $(v_0,v_1)$ this limit, and by $v_*$ the solution of \eqref{CP} with initial data $(v_0,v_1)$ at $t=0$. 

Fix $\tau_0\in [0,T_+(v))$, and let $s_0$ such that $s_0=-\log(1-\tau_0)$. By standard perturbation theory for equation \eqref{CP},
\begin{equation}
 \label{L5}
\lim_{n\to\infty}\sup_{0\leq \tau\leq \tau_0}\big\|\big(v_n(\tau)-v_*(\tau),\partial_{\tau}v_n(\tau)-\partial_{\tau}v_*(\tau)\big)\big\|_{\dot{H}^1\times L^2}=0.
\end{equation} 
Next, notice that 
$$w_n(s,y)=e^{-\frac{N-2}{2}s} v_n(1-e^{-s},e^{-s}y),$$
and thus, by \eqref{L5},
\begin{equation}
 \label{L5'}
\lim_{n\to\infty}\sup_{0\leq s\leq s_0}\big\|w_n(s)-w_*(s)\big\|_{\dot{H}^1}=0,
\end{equation} 
where
\begin{equation}
 \label{defw*}
w_*(s,y)=e^{-\frac{N-2}{2}s} v_*(1-e^{-s},e^{-s}y).
\end{equation} 
By \eqref{C20'}, 
$
\partial_sw_n=-\frac{N-2}{2}w_n-y\cdot\nabla w_n+e^{-\frac N2(s+\sigma_n)}\partial_tu(2-e^{-(\sigma_n+s)},e^{-(\sigma_n+s)}y)$, and thus
\begin{equation}
\label{L6}
\partial_sw_n=-\frac{N-2}{2}w_n-y\cdot\nabla w_n+e^{-\frac N2s}\partial_{\tau}v_n(1-e^{-s},e^{-s}y).
 \end{equation}
Since $|y|\leq 1$ on the support of $w_n$ (see \eqref{C15}), we deduce from \eqref{L5'}
\begin{equation*}
\lim_{n\to\infty}\left(\sup_{0\leq s\leq s_0}\big\|w_n(s)-w_*(s)\big\|_{L^2}+\sup_{0\leq s\leq s_0}\big\|y\cdot\nabla w_n(s)-y\cdot \nabla w_*(s)\big\|_{L^2}\right)=0.
\end{equation*}
In view of \eqref{L5}, \eqref{L6}, and the equality 
$$\partial_sw_*(s,y)=-\frac{N-2}{2}w_*-y\cdot \nabla w_*+e^{-\frac{N}{2}s}\partial_{\tau} v_*(1-e^{-s},e^{-s}y),$$
we obtain
$$ \lim_{n\to\infty}\sup_{0\leq s\leq s_0}\big\|\partial_s w_n(s)-\partial_sw_*(s)\big\|_{\dot{H}^1}=0,$$
which concludes Step 1.

\EMPH{Step 2: elliptic equation}
We prove that $w_*$ is independent of $s$, not identically $0$, and satisfies
\begin{gather}
\label{L7}
\supp w_*\subset B_1\\
\label{L8}
\frac{1}{\rho} \Div\left(\rho\nabla w_*-\rho(y\cdot\nabla w_*)y\right) -\frac{N(N-2)}{4}w_*+|w_*|^{\frac{4}{N-2}}w_*=0\\
\label{L8'}
\int_{B_1}\frac{|w_*|^2}{(1-|y|^2)^2}\,dy<\infty\\
\label{L9}
\int_{B_1} \frac{|w_*|^{\frac{2N}{N-2}}}{(1-|y|^2)^{1/2}} \,dy<\infty\\
\label{L10}
\int \frac{|\nabla w_*|^2-(y\cdot\nabla w_*)^2}{(1-|y|^2)^{1/2}}\,dy<\infty.
\end{gather}
This contradicts Proposition 6.10 in \cite{KeMe08}, concluding the proof of Proposition \ref{P:lambda}.

The condition \eqref{L7} of the support follows immediately from \eqref{L3} and the corresponding condition \eqref{C15} on the support of $w$. By \eqref{C30}, 
$$\lim_{n\to\infty} \int_0^{s_0}\int_{B_1} (\partial_sw_n)^2\,dy\,ds=0,$$
which proves, combining with \eqref{L3}, that $\partial_s w_*$ is almost everywhere zero, and thus that $w_*$ is independent of $s$.  

Assume that $w_*\equiv 0$. Then $v_*\equiv 0$, and the small data theory for \eqref{CP}, together with \eqref{L5} implies that $u$ is global, a contradiction. Thus $w_*$ is not identically $0$.

The bound \eqref{L8'} follows from \eqref{L7}, Hardy's inequality and the fact that $w_*$ is in $\hdot(\RR^N)$. 

We next prove \eqref{L9}. Using the identity \eqref{C22} with $s_1=\sigma_n$ and $s_2=\sigma_n+s_0$ we get, combining with \eqref{C17}, \eqref{C18}, Lemma \ref{L:C6} and the fact that $\tilde{E}$ is nondecreasing
\begin{equation}
\label{L11}
\limsup_{n\to\infty} \int_{0}^{s_0} \int_{B_1} \frac{|w_n|^{\frac{2N}{N-2}}}{(1-|y|^2)^{1/2}} \,dy\,ds=M<\infty 
\end{equation} 
(see Step 1 of the proof of Proposition \ref{P:critical}, p. \pageref{C28}, for similar arguments). By \eqref{L3} and the Sobolev inequality, and since $w_*$ is independent of time we deduce that for all $\eps>0$,
$$ \int_{|y|\leq 1-\eps} \frac{|w_*|^{\frac{2N}{N-2}}}{(1-|y|^2)^{1/2}} \,dy\leq M/s_0,$$
and \eqref{L9} follows.

It remains to prove \eqref{L10}. By the definition of $\tE$,
\begin{multline*}
\frac{1}{2}\int_{0}^{s_0} \int_{B_1}\frac{|\nabla w_n(s,y)|^2-(y\cdot\nabla w_n(s,y))^2}{(1-|y|^2)^{1/2}}\,dy\,ds\\
\leq \int_{\sigma_n}^{\sigma_n+s_0} \tE(s)\,ds+\frac{N-2}{2N}\int_{0}^{s_0} \int_{B_1}\frac{|w_n(s,y)|^{\frac{2N}{N-2}}}{(1-|y|^2)^{1/2}} \,dy\,ds.
\end{multline*}
Combining with \eqref{L11}, Lemma \ref{L:C5} and the monotonicity of $\tE$, we obtain
\begin{equation*}
\limsup_{n\to\infty} \int_{0}^{s_0} \int_{B_1}\frac{|\nabla w_n(s,y)|^2-(y\cdot\nabla w_n(s,y))^2}{(1-|y|^2)^{1/2}}\,dy\,ds\leq s_0 E(\vu_0)+\frac{N-2}{2N}M. 
\end{equation*} 
which yields \eqref{L10} with a similar argument as before.
\end{proof}

\section{Energy-supercritical wave equation}
\label{S:super}
In this section, we let $N\geq 3$, $p>\frac{N+2}{N-2}$ and consider the supercritical focusing wave equation:
\begin{equation}
 \label{CPsup}
 \left\{\begin{aligned}
         \partial_t^2u -\Delta u&=|u|^{p-1}u,\quad t\in I,\; x\in \RR^N\\
         (u,\partial_tu)_{\restriction t=0}&=\vec{u}_0\in \dot{H}^{s_c}\times \dot{H}^{s_c-1} ,
        \end{aligned}\right.
\end{equation} 
where $s_c=\frac{N}{2}-\frac{2}{p-1}>1$, $I$ is an interval containing $0$, and the unknown function $u$  is again real-valued. 

We assume furthermore that $p$ is an odd integer, or that $p$ is large enough ($p>N/2$ is sufficient), so that 
the equation \eqref{CPsup} is locally well-posed: for any $\vec{u}_0\in \dot{H}^{s_c}\times\dot{H}^{s_c-1}$, there exists an unique solution $\vec{u}=(u,\partial_tu)\in C^0\left(I_{\max}(\vec{u}_0),\dot{H}^{s_c}\times \dot{H}^{s_c-1} \right)$ defined on a maximal interval of existence $I_{\max}(u)=(T_-(\vec{u}_0),T_+(\vec{u}_0))$ and that satisfies \eqref{CPsup} in the Duhamel sense.

We say that a solution $u$ of \eqref{CPsup} has the compactness property when there exist $\lambda(t)>0$, $x(t)\in \RR^N$, defined for $t\in I_{\max}(\vec{u}_0)$ and such that
\begin{equation}
 \label{defKsup}
K=\left\{\left( \lambda(t)^{\frac{2}{p-1}}u\left( t,\lambda(t)y+x(t)\right), \lambda(t)^{\frac{2}{p-1}-1}\partial_tu\left( t,\lambda(t)y+x(t) \right)\right)\;;\; t\in I_{\max}(\vec{u}_0)\right\}
 \end{equation} 
has compact closure in $\dot{H}^{s_c}\times \dot{H}^{s_c-1} $.  In this section we prove:
\begin{prop}
\label{P:mainsup}
 Let $p$ be as above, and $u$  a solution of \eqref{CPsup} with the compactness property. Then $u$ is global.
\end{prop}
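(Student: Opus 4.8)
The plan is to transpose the self-similar scheme of Subsection~\ref{SS:SSvariables} from the energy space $\hdot\times L^2$ to the scaling-critical space $\dot{H}^{s_c}\times\dot{H}^{s_c-1}$. I argue by contradiction: suppose $T_+(u)<\infty$ and, translating in time, normalise $T_+(u)=1$. The virial argument of \cite{DuKeMe15a} (stated there for a wide class of dispersive equations) should again yield $T_-(u)=-\infty$, and the finite-speed-of-propagation argument of \cite{KeMe08} together with the compactness property should produce $x_+\in\RR^N$ with $\supp\vu(t)\subset\{|x-x_+|\le 1-t\}$; translating in space we may take $x_+=0$, so $\supp\vu\subset\{|x|\le 1-t\}$. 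The structural fact special to the supercritical regime is that, since $s_c>1$, a function supported in a fixed ball and bounded in $\dot{H}^{s_c}$ is bounded in $\hdot\cap L^2$ on that ball; combined with the compactness of $K$, this is what will make the weighted quantities in the self-similar analysis finite.

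Next I would pass to the self-similar variables adapted to \eqref{CPsup}, namely $y=x/(2-t)$, $s=-\log(2-t)$ and $w(s,y)=(2-t)^{\frac{2}{p-1}}u(t,x)$, where the exponent $\frac{2}{p-1}$ reduces to $\frac{N-2}{2}$ at $p=\frac{N+2}{N-2}$. As in \eqref{C15}--\eqref{C16}, $w$ is supported in $\{|y|\le 1-e^s\}$ and solves, for $s<0$, the Merle--Zaag self-similar equation with weight $\rho=(1-|y|^2)^{\frac{2}{p-1}-\frac{N-1}{2}}$ (equal to $(1-|y|^2)^{-1/2}$ in the critical case), elliptic part $\frac1\rho\Div(\rho\nabla w-\rho(y\cdot\nabla w)y)$, zeroth-order term $-c_{p,N}w$ with $c_{p,N}=\frac{2(p+1)}{(p-1)^2}$, nonlinearity $|w|^{p-1}w$, transport term $-2y\cdot\nabla\partial_s w$, and damping $-\frac{p+3}{p-1}\partial_sw$. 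I would then reprove the analogues of Lemmas~\ref{L:C3}--\ref{L:C6}, the weighted bounds coming from the change of variables, the compact support, Hardy's inequality and the $s_c>1$ remark; the corresponding self-similar energy
\[
\tE(s)=\int_{B_1}\left(\tfrac12(\partial_sw)^2+\tfrac12\big(|\nabla w|^2-(y\cdot\nabla w)^2\big)+\tfrac{c_{p,N}}{2}w^2-\tfrac{1}{p+1}|w|^{p+1}\right)\rho\,dy
\]
is monotone, with dissipation $\int_{B_1}\frac{(\partial_sw)^2}{1-|y|^2}\,\rho\,dy$.

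Because the supercritical equation has no conserved scaling-critical energy, the time-average argument of Proposition~\ref{P:critical} — which rested on the identity \eqref{C9} for the conserved energy $E(\vu_0)$ — is unavailable and must be replaced by the intrinsic monotonicity of $\tE$. Following instead the scheme of Proposition~\ref{P:lambda}, I would bound $\tE$ from above (via the weighted $L^{p+1}$ estimate along a sequence $s_n\to-\infty$, as in Step~2 of Proposition~\ref{P:critical}), deduce $\int_{-\infty}^0\int_{B_1}\frac{(\partial_sw)^2}{1-|y|^2}\,\rho\,dy\,ds<\infty$, and then use the precompactness of $K$ to extract a limit $w_*$ along $s_n\to-\infty$, which is independent of $s$, supported in $\overline{B_1}$, carries the weighted integrability analogous to \eqref{L8'}--\eqref{L10} (with $\frac{2N}{N-2}$ replaced by $p+1$), and solves the stationary singular elliptic equation
\[
\tfrac1\rho\,\Div\big(\rho\nabla w_*-\rho(y\cdot\nabla w_*)y\big)-c_{p,N}w_*+|w_*|^{p-1}w_*=0 .
\]
Since $w_*\equiv 0$ would force $u$ to be global by small-data theory (as in Step~2 of the proof of Proposition~\ref{P:lambda}), the profile $w_*$ is nonzero, and the proof ends by excluding it.

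The main obstacle is this final exclusion: an elliptic rigidity theorem — the supercritical analogue of Proposition~6.10 of \cite{KeMe08} — asserting that the singular elliptic equation above has no nontrivial solution with the available weighted integrability, for $p>\frac{N+2}{N-2}$. The Pohozaev/virial identity behind the critical statement has to be revisited since the sign of its scaling term is governed by the position of $\frac{2}{p-1}$ relative to $\frac{N-2}{2}$, and one must verify that the integrability furnished by $\dot{H}^{s_c}$ control (rather than $\hdot$ control) is enough to justify the boundary terms. A secondary difficulty is to make the weighted a priori bounds uniform as $s\to-\infty$: the self-similar scale $2-t$ and the compactness scale $\lambda(t)$ differ, the relevant norms are no longer scale-invariant as they were in the critical case, and one must guarantee both the boundedness of $\tE$ and the nondegeneracy of the extraction on the whole half-line $s<0$, not merely near $s=0$ as in \cite{KeMe08}.
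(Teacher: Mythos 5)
Your proposal does not close: it is a program, not a proof, and the step on which everything hinges is exactly the one you leave open. The self-similar scheme of Section \ref{S:main} ends by invoking Proposition 6.10 of \cite{KeMe08}, a rigidity theorem for the singular elliptic equation \eqref{L8} which is proved there only for the critical exponent; your supercritical analogue (nonexistence of nontrivial solutions of $\frac1\rho\Div(\rho\nabla w_*-\rho(y\cdot\nabla w_*)y)-c_{p,N}w_*+|w_*|^{p-1}w_*=0$ with the weighted integrability) is not in the literature and you explicitly flag it as ``the main obstacle'' without supplying an argument. Since the Pohozaev-type identity underlying the critical case changes structure when $\frac{2}{p-1}<\frac{N-2}{2}$, this is not a routine adaptation, and without it your contradiction never materializes. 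A second unjustified step is the claim that $T_+(u)<\infty$ forces $T_-(u)=-\infty$: the virial argument of \cite{DuKeMe15a} is formulated for the energy-critical equation, where the conserved energy is the critical quantity; in the supercritical setting there is no conserved quantity at the level of $\dot H^{s_c}\times\dot H^{s_c-1}$, and you cannot simply cite it.

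The irony is that the support condition you do derive correctly (the analogue of \eqref{S2}, by the cutoff/small-data/finite-speed argument) already finishes the proof by elementary means, with no self-similar variables at all — this is the paper's route. Since $s_c>1$, a solution bounded in $\dot H^{s_c}\times\dot H^{s_c-1}$ and supported in a backward light cone has finite energy and finite $L^2$ norm, and both tend to $0$ at the finite endpoint of $I_{\max}$; by conservation, $E(\vu)\equiv 0$. Then $y(t)=\int u^2$ satisfies
\begin{equation*}
y''(t)=2\int(\partial_tu)^2-2\int|\nabla u|^2+2\int|u|^{p+1}=(p+3)\int(\partial_tu)^2+(p-1)\int|\nabla u|^2>0,
\end{equation*}
where the second equality uses $E\equiv 0$. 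Since $y'\to 0$ at the finite endpoint, $y$ is strictly convex with $y'$ of a fixed sign on $I_{\max}$; if the other endpoint is also finite the same argument gives $y'\to 0$ there, contradicting strict convexity, while if it is infinite, Cauchy--Schwarz gives $y'^2\le\frac{4}{p+3}\,y\,y''$, so $y^{-(p-1)/4}$ is positive, strictly decreasing and concave on a half-line, which is impossible. You should compare the two strategies: the zero-energy observation is what the supercritical scaling buys you, and it makes the heavy machinery of Propositions \ref{P:critical} and \ref{P:lambda} (designed precisely because the critical case has nonzero conserved energy) unnecessary here.
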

We conjecture that when $p>\frac{N+2}{N-2}$, the only solution of \eqref{CPsup} with the compactness property is $0$. This would imply, in particular, that any solution of \eqref{CPsup} which is bounded in the critical space $\dot{H}^{s_c}\times \dot{H}^{s_c-1}$ scatters. This conjecture was settled in space dimension $3$ for radial solutions in \cite{DuKeMe12c}. We refer to \cite{KeMe11,KillipVisan11} for the corresponding defocusing equation.

The proof relies on classical monotonicity formulas. Note that Proposition \ref{P:mainsup} excludes in particular self-similar blow-up, generalizing \cite[Proposition 2.2]{DuKeMe12c} with a simpler proof. 
\begin{proof}
We let $u$ be a solution of \eqref{CPsup} with the compactness property, $I_{\max}(\vu_0)=(T_-,T_+)$ the maximal interval of existence of $u$, and $\lambda(t)$, $x(t)$, $t\in (T_-,T_+)$ such that $K$ defined by \eqref{defKsup} has compact closure in $\dot{H}^{s_c}\times \dot{H}^{s_c-1} $. We argue by contradiction, assuming that $T_-$ is finite.

\EMPH{Step 1. Condition on the support}

We prove that there exists $x_-\in \RR^N$ such that 
\begin{equation}
 \label{S2}
 \supp u\subset\Big\{ |x-x_-|\leq |T_--t|\Big\}.
\end{equation} 
The proof is quite standard. We give it for the sake of completeness.
By the local Cauchy theory for \eqref{CPsup}, 
$$\lim_{t\overset{>}{\longrightarrow} T_-}\lambda(t)=0.$$
By finite speed of propagation, $x(t)$ is bounded on $(T_-,T_+)$. Let $\{\tau_n\}_n$ be a sequence of times in $(T_-,T_+)$ such that $\{\tau_n\}_n$ goes to $T_-$, and $x(\tau_n)$ has a limit $x_-\in \RR^N$ as $n$ goes to infinity. 

We fix $t\in (T_-,0]$. Let $\eps$ be a small positive number. Let $\chi\in C_0^{\infty}(\RR^N)$ such that $\chi(x)=1$ if $|x|\geq 1$ and $\chi(x)=0$ if $|x|\leq \frac 12$. By the precompactness of $K$, we can find $R>0$ such that 
\begin{equation}
 \label{small}
 \forall n,\quad \left\|\left(u(\tau_n)\chi\left( \frac{\cdot-x(\tau_n)}{\lambda(\tau_n)R} \right),\partial_t u(\tau_n)\chi\left( \frac{\cdot-x(\tau_n)}{\lambda(\tau_n)R} \right)\right)\right\|_{\dot{H}^{s_c}\times \dot{H}^{s_c-1}}<\eps.
\end{equation}
Let $\tilde{u}_n$ be the solution of \eqref{CPsup} with initial data 
$$\left(u(\tau_n)\chi\left( \frac{\cdot-x(\tau_n)}{\lambda(\tau_n)R} \right),\partial_t u(\tau_n)\chi\left( \frac{\cdot-x(\tau_n)}{\lambda(\tau_n)R} \right)\right).$$
By the small data theory, this solution is global and  
$$ \forall \tau,\quad \left\|(\tilde{u}_n(\tau),\partial_t\tilde{u}_n(\tau))\right\|_{\dot{H}^{s_c}\times \dot{H}^{s_c-1}}\leq 2\eps.$$
Combining with finite speed of propagation and Sobolev inequality, we obtain:
$$ \|u(t)\|_{L^{p_c}\left(\{|x-x(\tau_n)|\geq 2\lambda(\tau_n)R+|t-\tau_n|\right)}\leq C\eps,$$
where $p_c=\frac{(p-1)N}{2}$.
Letting $n\to \infty$, we obtain
$$\|u(t)\|_{L^{p_c}\left(\{|x-x_-|\geq |t-T_-|\}\right)}\leq C\eps.$$
Since $\eps$ is arbitrary, we deduce \eqref{S2}.

\EMPH{Step 2. Monotonicity formula and end of the proof}

By Step 1, $\vu(t)\in \dot{H}^{1}\times L^2$ for all $t\in (T_-,T_+)$ and, letting 
\begin{equation}
 \label{S3}
 y(t)=\int_{\RR^N}u^2,
\end{equation} 
we obtain with equation \eqref{CPsup} that $y$ is twice differentiable on $(T_-,T_+)$ and 
\begin{align}
 \label{S4}
 y'(t)&=2\int_{\RR^N} u\partial_tu\\
 \label{S5}
 y''(t)&=2\int_{\RR^N}(\partial_tu)^2-2\int |\nabla u|^2+2\int |u|^{p+1}.
\end{align}
Furthermore, the energy $E(\vu(t))$ defined in \eqref{defE} is well-defined and conserved with the flow. Since $u$ is bounded in $\dot{H}^{s_c}\times \dot{H}^{s_c-1} $, the condition \eqref{S2} on the support of $u$ implies
\begin{equation}
 \label{S6}
 \lim_{t\overset{>}{\longrightarrow} T_-}E(\vu(t))=0\quad \text{and}\quad \lim_{t\overset{>}{\longrightarrow} T_-}y(t)=\lim_{t\overset{>}{\longrightarrow} T_-}y'(t)=0.
\end{equation} 
By the conservation of the energy
\begin{equation}
 \label{S7}
 \forall t\in (T_-,T_+), \quad E(\vu(t))=0,
\end{equation} 
and we can rewrite \eqref{S5} as
\begin{equation}
 \label{S8} 
 y''(t)=(p+3)\int(\partial_tu)^2+(p-1)\int |\nabla u|^2>0.
\end{equation} 
Combining with the limit of $y'(t)$ in \eqref{S6} we deduce
\begin{equation}
 \label{S9}
 \forall t\in (T_-,T_+),\quad y'(t)>0.
\end{equation} 
We first exclude the case $T_+<\infty$. In this case, similar arguments than above yield
\begin{equation}
 \label{S10}
\lim_{t\overset{<}{\longrightarrow} T_+}y'(t)=0,
\end{equation} 
contradicting the strict convexity \eqref{S8} of $y$ and the fact that $\lim_{t\to T_-}y'(t)=0$.

Thus we must have $T_+=+\infty$. By \eqref{S4} and \eqref{S8},
\begin{equation}
 \label{S11}
 y'(t)^2\leq \frac{4}{p+3} y''(t)y'(t).
\end{equation} 
Since $y'(t)>0$ for all $t>T_-$, it is straightforward that $y^{-(p-1)/4}$ is strictly decreasing and strictly concave, contradicting $T_+=+\infty$.
\end{proof}
\section{Nonlinear Schr\"odinger equation}
\label{S:NLS}
In this section we consider the energy-supercritical nonlinear Schr\"odinger equation
\begin{equation}
 \label{NLS}
 \left\{
 \begin{aligned}
 i\partial_tu+\Delta u+\iota |u|^{p-1}u&=0,\quad t\in I,\; x\in \RR^N\\
 u_{\restriction t=0}&=u_0\in \dot{H}^{s_c}(\RR^N),
 \end{aligned}
 \right.
\end{equation} 
where $I$ is a real interval containing $0$. Here $N\geq 3$, $p>\frac{N+2}{N-2}$, $\iota\in \{\pm 1\}$ and $s_c=\frac{N}{2}-\frac{2}{p-1}>1$. We assume again that $p$ is an odd integer, or that $p$ is large enough (say $p>N/2$), so that the equation is locally well-posed in $\dot{H}^{s_c}$. A solution $u$ of \eqref{NLS} \emph{with the compactness property} is by definition a solution with maximal interval of existence $(T_-,T_+)$ such that there exists $\lambda(t)>0$, $x(t)\in \RR^N$, defined for $t\in (T_-,T_+)$, such that
\begin{equation}
\label{defK2}
K=\left\{ \lambda(t)^{\frac{2}{p-1}} u\big( t,\lambda(t)y+x(t)\big),\; t\in (T_-,T_+) \right\} 
\end{equation} 
has compact closure in $\dot{H}^{s_c}$. In this section we prove:
\begin{prop}
\label{P:NLS}
 Let $p$ be as above, and $u$ a solution of \eqref{NLS} with the compactness property. Then $u$ is global.
\end{prop}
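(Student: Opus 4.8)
The plan is to mirror the strategy used for the supercritical wave equation in Proposition \ref{P:mainsup}, exploiting the fact that the energy-supercritical Schr\"odinger equation carries the same conserved energy and a virial-type (Morawetz/localized virial) identity. I argue by contradiction, assuming $T_-$ is finite. The first step is to establish a compact-support condition analogous to \eqref{S2}: there exists $x_-\in\RR^N$ such that $\supp u(t)\subset\{|x-x_-|\le |T_--t|\}$ as $t\to T_-$. For the wave equation this relied on finite speed of propagation, which Schr\"odinger does \emph{not} enjoy, so this is where the argument must genuinely differ. Instead I would use the precompactness of $K$ together with the fact that $\lambda(t)\to 0$ as $t\to T_-$ to show that the solution concentrates at a single point $x_-$, and use the conservation of mass/energy together with the scaling to deduce that mass escapes to the concentration point. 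The weaker conclusion one actually needs is that $\vec u(t)$ enters $\dot H^1\cap L^2$ near $T_-$ with $\|u(t)\|_{L^2}\to 0$ and $E(u(t))\to 0$.

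Once one has $u(t)\in H^1$ with vanishing energy and $L^2$-norm at $T_-$, the second step is to run the virial identity. Setting $y(t)=\int|u|^2$ (the mass, which for Schr\"odinger is conserved, so this is \emph{not} the right functional) — more precisely, I would use the variance $V(t)=\int |x|^2|u|^2\,dx$, whose second derivative is controlled by the Glassey/virial identity
$$
V''(t)=16 E(u)-\frac{4(N(p-1)-4)}{p+1}\int|u|^{p+1},
$$
and in the supercritical case $N(p-1)>4$, so that when $E(u)=0$ the right-hand side has a definite sign. The compactness property forces $E(u)\equiv 0$ (arguing as in \eqref{S7} via the limit at $T_-$), making $V$ strictly concave, and then the usual convexity/concavity obstruction shows $T_+$ cannot be infinite and the corresponding finite-$T_+$ case is excluded by symmetry, giving the contradiction.

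The main obstacle I expect is Step 1: because finite speed of propagation fails, pinning down $x_-$ and proving the decay of $\|u(t)\|_{L^2}$ and $E(u(t))$ at $T_-$ requires a concentration-compactness argument rather than a light-cone estimate. The cleanest route is probably to show that after modulation the profile $(\lambda(t)^{2/(p-1)}u(t,\lambda(t)\cdot+x(t)))$ stays in a compact set of $\dot H^{s_c}$, and that $\lambda(t)\to 0$ forces all the $\dot H^1$ and $L^2$ mass to collapse into a vanishingly small ball, whence by dominated convergence the energy and $L^2$-norm vanish; one must be careful that $s_c>1$ so that $\dot H^{s_c}$-boundedness plus localization genuinely yields the subcritical ($\dot H^1$, $L^2$) control near $T_-$. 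After that, the virial computation is routine and the convexity obstruction is identical in spirit to \eqref{S11}.
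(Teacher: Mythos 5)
Your proposal has a genuine and fatal gap at Step 1, and the problem propagates through the whole argument. For the supercritical wave equation, the paper can run a virial argument only because finite speed of propagation gives compact support \eqref{S2}, and compact support plus the $\dot H^{s_c}\times\dot H^{s_c-1}$ bound is what makes $\vec u(t)\in \dot H^1\times L^2$, so that mass, energy and the virial quantities are \emph{finite}. For NLS there is no such mechanism, and you correctly flag this, but your proposed substitute does not work: the compactness property only says that $v(t)=\lambda(t)^{2/(p-1)}u(t,\lambda(t)\cdot+x(t))$ stays in a compact subset of $\dot H^{s_c}$, which carries no information whatsoever about the $L^2$ or $\dot H^1$ norms of $u(t)$ on the whole space (under the scaling, $\|u(t)\|_{L^2}=\lambda(t)^{s_c}\|v(t)\|_{L^2}$, and $\|v(t)\|_{L^2}$ may be infinite since $v(t)$ is only in $\dot H^{s_c}$). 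Invoking ``conservation of mass/energy'' to deduce that $u(t)\in L^2\cap H^1$ is circular: the conservation laws presuppose these quantities are finite, which is exactly what is in question. The variance $\int|x|^2|u|^2\,dx$ requires even more spatial decay and is likewise undefined. Finally, even granting all of this, your concavity argument uses the focusing sign of the nonlinearity, whereas Proposition \ref{P:NLS} covers both $\iota=\pm 1$; in the defocusing case the sign in the virial identity flips and the obstruction disappears.

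The paper avoids all of these issues by never working with global (unlocalized) quantities and never using the sign of the nonlinearity. It introduces the \emph{localized} mass $V_R(t)=\int|u(t,x)|^2\varphi(x/R)\,dx$, which is always finite thanks to the interpolation estimate of Claim \ref{C:embedding} ($\int_{|x|\le R}|f|^2\le CR^{2s_c}\|f\|_{\dot H^{s_c}}^2$), proves that $V_R(t)\to 0$ as $t\to T_+$ using precompactness of $K$ and $\lambda(t)\to 0$, and then exploits the identity
\begin{equation*}
V_R'(t)=\frac{2}{R}\im\int \nabla\varphi\left(\frac{x}{R}\right)\overline{u}\cdot\nabla u\,dx,
\end{equation*}
which involves no nonlinear term at all, together with Claim \ref{C:embedding}, to get $|V_R'(t)|\le \frac{C}{R}V_{4R}(t)^{1-\frac{1}{2s_c}}$. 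Integrating up to $T_+$ (where the boundary term vanishes by Step 1) and iterating the resulting self-improving bound on the exponent $\alpha$ in $|V_R(t)|\le CR^{\alpha}$ drives $\alpha$ below $0$, forcing $u_0=0$, a contradiction. If you want to salvage your plan, you would have to replace every global quantity by a localized one and control all the cutoff error terms without compact support — at which point you are essentially reconstructing the paper's iteration, not the wave-equation virial argument.
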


The proof of Proposition \ref{P:NLS} is based on differentiation of the localized $L^2$ norm as in the energy-critical case $p=\frac{N+2}{N-2}$  (see case 1 in the proof of Proposition 5.3 in \cite{KeMe06}), with an additional iteration of the argument (see step 2 below). 

As in the case of the wave equation, we conjecture that the only solution of \eqref{NLS} with the compactness property with $p>\frac{N+2}{N-2}$ is $0$. 
This was proved in the defocusing case $\iota=-1$ in dimension $N\geq 5$ in \cite{KillipVisan10b}. The proof in \cite{KillipVisan10b} that a solution with the compactness property is global, relying on energy conservation, is specific to the defocusing case.

We will need the following claim:
\begin{claim}
\label{C:embedding}
 There exists a constant $C>0$ such that for all $f\in \dot{H}^{s_c}(\RR^N)$, for all $R>0$,
 \begin{gather*}
 \int_{|x|\leq R}|f(x)|^2\,dx\leq CR^{2s_c}\|f\|^2_{\dot{H}^{s_c}},\quad \int_{|x|\leq R}|\nabla f(x)|^2\,dx\leq C\|f\|^{\frac{2}{s_c}}_{\dot{H}^{s_c}}\left(\int_{|x|\leq 2R} |f(x)|^2\,dx\right)^{1-\frac{1}{s_c}}.
 \end{gather*} 
\end{claim}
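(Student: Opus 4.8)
The plan is to prove the two inequalities in Claim~\ref{C:embedding} separately, both by reducing a local estimate to the global homogeneous Sobolev norm via interpolation and a rescaling argument that exploits the homogeneity of the $\dot{H}^{s_c}$ norm. Throughout I keep in mind that $s_c>1$, so $L^2$ on balls is a supercritical (lower-regularity) quantity controlled by $\dot{H}^{s_c}$ only after localizing.

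\emph{First inequality.} For the bound $\int_{|x|\leq R}|f|^2\,dx\leq CR^{2s_c}\|f\|_{\dot{H}^{s_c}}^2$, I would first establish the case $R=1$ and then recover the general case by scaling. For $R=1$, pick a fixed cutoff $\varphi\in \Cio(\RR^N)$ with $\varphi\equiv 1$ on the unit ball; then $\int_{|x|\leq 1}|f|^2\leq \|\varphi f\|_{L^2}^2$, and since $\varphi f$ has compact support one has the (inhomogeneous) Sobolev-type control $\|\varphi f\|_{L^2}\leq C\|\varphi f\|_{\dot{H}^{s_c}}\leq C\|f\|_{\dot{H}^{s_c}}$, using $s_c>0$ together with the product/multiplier estimate for the compactly supported factor $\varphi$. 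To pass to general $R$, set $f_R(x)=R^{-s_c+N/2}f(Rx)$, which is the dilation preserving $\|\cdot\|_{\dot{H}^{s_c}}$ exactly (i.e.\ $\|f_R\|_{\dot{H}^{s_c}}=\|f\|_{\dot{H}^{s_c}}$); applying the $R=1$ case to $f_R$ and changing variables back yields the factor $R^{2s_c}$ after accounting for the Jacobian. The homogeneity bookkeeping is the only thing to check carefully here.

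\emph{Second inequality.} For $\int_{|x|\leq R}|\nabla f|^2\leq C\|f\|_{\dot{H}^{s_c}}^{2/s_c}\bigl(\int_{|x|\leq 2R}|f|^2\bigr)^{1-1/s_c}$, the natural route is a Gagliardo--Nirenberg interpolation on the ball. The exponents are designed so that $\dot{H}^1$ sits between $L^2$ and $\dot{H}^{s_c}$ at the fraction $\theta$ determined by $1=\theta\, s_c+(1-\theta)\cdot 0$, i.e.\ $\theta=1/s_c$; the resulting interpolation inequality reads $\|\nabla f\|_{L^2}\lesssim \|f\|_{\dot{H}^{s_c}}^{1/s_c}\|f\|_{L^2}^{1-1/s_c}$, which matches the claimed powers $2/s_c$ and $2(1-1/s_c)$ after squaring. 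I would localize this: multiply by a cutoff equal to $1$ on $\{|x|\leq R\}$ and supported in $\{|x|\leq 2R\}$, apply the interpolation inequality to the localized function, and control the commutator/cutoff error terms. The Leibniz terms involving derivatives of the cutoff scale like $R^{-1}$ and are absorbed either into the $L^2(B_{2R})$ factor or (after using the already-established first inequality to bound lower-order ball norms) into the right-hand side; scaling again fixes the $R$-dependence so that it drops out, consistent with the scale-invariant form of the stated bound.

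\emph{Main obstacle.} The delicate point is controlling the lower-order and commutator terms generated by localization in the \emph{second} inequality, since $\dot H^{s_c}$ is a \emph{homogeneous} fractional Sobolev space: fractional derivatives are nonlocal, so cutting off does not interact as cleanly as with integer-order integration by parts, and one must invoke a Kato--Ponce (fractional Leibniz) estimate or a Littlewood--Paley decomposition to bound $\|\varphi f\|_{\dot H^{s_c}}$ in terms of $\|f\|_{\dot H^{s_c}}$ plus controllable lower-order ball norms. Verifying that all error terms genuinely carry the right powers of $R$ and of $\|f\|_{\dot H^{s_c}}$, so that the final estimate is scale invariant, is the step I expect to require the most care.
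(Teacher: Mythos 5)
Your proposal is correct and is essentially the paper's own proof: reduce to $R=1$ by the $\dot{H}^{s_c}$-invariant scaling, get the first bound from Sobolev embedding, and get the gradient bound by applying the interpolation inequality $\|g\|_{\dot{H}^1}\le C\|g\|_{\dot{H}^{s_c}}^{1/s_c}\|g\|_{L^2}^{1-1/s_c}$ to $g=\varphi f$, with $\varphi$ a cutoff equal to $1$ on the unit ball and supported in the ball of radius $2$. Note only that your anticipated ``main obstacle'' largely evaporates: since $\varphi\equiv 1$ on the region where $|\nabla f|^2$ is integrated, no Leibniz/commutator error terms arise, and the sole nonlocal ingredient is the standard multiplier bound $\|\varphi f\|_{\dot{H}^{s_c}}\le C\|f\|_{\dot{H}^{s_c}}$ (valid because $0<s_c<N/2$), which you correctly identify as provable by Kato--Ponce or Littlewood--Paley.
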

\begin{proof}
 It is sufficient to prove both inequalities for $R=1$. The general case follows by scaling. 
 
 The first inequality with $R=1$ is elementary. The second one is an immediate consequence of the interpolation inequality
 $$\|g\|_{\dot{H}^1}\leq C\|g\|_{\dot{H}^{s_c}}^{\frac{1}{s_c}}\|g\|_{L^2}^{1-\frac{1}{s_c}}$$
 applied to $g=\varphi f$, where $\varphi\in C_0^{\infty}(\RR^N)$, $\varphi(x)=1$ if $|x|\leq 1$ and $\varphi(x)=0$ if $|x|\geq 2$.
\end{proof}
\begin{proof}[Proof of Proposition \ref{P:NLS}]
 We argue by contradiction. Let $u$ be a solution of \eqref{NLS} with the compactness property, and $\lambda(t)$, $x(t)$ such that $K$ defined by \eqref{defK2} has compact closure in $\dot{H}^{s_c}$. Assume that the maximal forward time of existence $T_+$ of $u$ is finite. 
 
 Let $\varphi\in C_0^{\infty}(\RR^N)$ be a nonnegative function such that $\varphi(x)=1$ if $|x|\leq 1$ and $\varphi(x)=0$ if $|x|\geq 2$. Let, for $R>0$,
 \begin{equation}
  \label{N1}
  V_R(t)=\int |u(t,x)|^2\varphi\left( \frac{x}{R} \right)\,dx.
 \end{equation} 
 Then $V_R$ is differentiable and 
 \begin{equation}
  \label{N2}
  V_R'(t)=\frac{2}{R}\im \int \nabla \varphi\left( \frac{x}{R} \right)\overline{u}(t,x)\cdot\nabla u(t,x)\,dx.
 \end{equation} 
 \EMPH{Step 1} We prove that for all $R>0$
 \begin{equation}
  \label{N3}
  \lim_{t\overset{<}{\longrightarrow}T_+}V_R(t)=0.
 \end{equation} 
 First note that by the local Cauchy theory for \eqref{NLS} and the compactness of the closure of $K$ in $\dot{H}^{s_c}$, one has 
 \begin{equation}
  \label{lim_lambda}
  \lim_{t\overset{<}{\longrightarrow}T_+}\lambda(t)=0.
 \end{equation} 
 Let 
 $$ v(t,\cdot)=\lambda(t)^{\frac{2}{p-1}}u\left( t,\lambda(t)\cdot+x(t)\right)\in K.$$
Then
\begin{multline*}
V_R(t)=\lambda(t)^{2s_c} \int \varphi\left(\frac{\lambda(t)y+x(t)}{R}\right)|v(t,y)|^2\,dy= \\
=\underbrace{\lambda(t)^{2s_c} \int_{|y|\leq \eps R/\lambda(t)}\ldots}_{A_{\eps}(t)}+\underbrace{
\lambda(t)^{2s_c} \int_{|y|\geq \eps R/ \lambda(t)}\ldots}_{B_{\eps}(t)}
\end{multline*}
By H\"older's inequality and Sobolev's embedding, using that $v$ is bounded in $\dot{H}^{s_c}$,
\begin{equation*}
A_{\eps}(t) \leq C\lambda(t)^{2s_c} \left(\int_{|y|\leq \eps R/\lambda(t)} \,dy\right)^{\frac{p_c-2}{p_c}}\|v(t)\|_{L^{p_c}}^2\leq C\lambda(t)^{2s_c}\left( \int_{|y|\leq \eps R/\lambda(t)}dy \right)^{\frac{p_c-2}{p_c}}\leq C(\eps R)^{2s_c}
\end{equation*}
where $p_c=\frac{N(p-1)}{2}$, so that $\dot{H}^{s_c}$ is embedded into $L^{p_c}$ . Thus $A_{\eps}(t)$ is small (uniformly in $t$) when $\eps$ is small.

Using H\"older's inequality again, we obtain
\begin{multline*}
B_{\eps}(t)\leq \lambda^{2s_c}\left(\int \Big|\varphi\left( \frac{\lambda(t)y+x(t)}{R} \right)\Big|^{\frac{p_c}{p_c-2}}\,dy\right)^{\frac{p_c-2}{p_c}}\left( \int_{|y|\geq R\eps/\lambda(t)}|v(t,y)|^{p_c}\,dy \right)^{\frac 2p_c}\\
\leq \left(\int |\varphi(x/R)|^\frac{p_c}{p_c-2}\,dx\right)^{\frac{p_c-2}{p_c}}\left(\int_{|y|\geq R\eps/\lambda(t)}|v(t,y)|^{p_c}\,dy\right)^{\frac{2}{p_c}}.
\end{multline*}
Using \eqref{lim_lambda} and the fact that $v(t)$ stays in a compact subset of $\dot{H}^{s_c}$, we obtain $\lim_{t\overset{<}{\longrightarrow}T_+}B_{\eps}(t)=0$, and \eqref{N3} follows.

\EMPH{Step 2} We conclude the proof, showing that there exists $\beta<0$ such that
\begin{equation}
 \label{N4}
 \forall t\in [0,T_+), \; \forall R>0,\quad |V_R(t)|\leq CR^{\beta}.
\end{equation} 
Indeed, if \eqref{N4} holds, letting $R\to \infty$ at time $t=0$, we obtain $u_0=0$, a contradiction. 

We prove \eqref{N4} as a consequence of the following implication (for $\alpha\in \RR$).
\begin{multline}
 \label{N5}
 \Big(\exists C>0,\; \forall t\in [0,T_+),\; \forall R>0,\; |V_R(t)|\leq CR^{\alpha}\Big)\\
 \Longrightarrow \Big(\exists C>0,\; \forall t\in [0,T_+),\; \forall R>0,\; |V_R(t)|\leq CR^{\alpha-1-\frac{\alpha}{2s_c}}\Big).
\end{multline}
By Claim \ref{C:embedding} the first line of \eqref{N5} holds with $\alpha=2s_c$. Thus \eqref{N5} implies \eqref{N4}. 

To prove \eqref{N5}, notice that by \eqref{N2}, Cauchy-Schwarz inequality and Claim \ref{C:embedding},
\begin{multline*}
|V'_R(t)|\leq \frac{C}{R}\left(\int_{|x|\leq 2R}|u(t,x)|^2dx\right)^{\frac{1}{2}}\left(\int_{|x|\leq 2R}|\nabla u(t,x)|^2dx\right)^{\frac{1}{2}}\\
\leq \frac{C}{R}\left(\int_{|x|\leq 4R}|u(t,x)|^2\,dx\right)^{1-\frac{1}{2s_c}}\|u(t)\|^{\frac{1}{s_c}}_{\dot{H}^{s_c}}.
\end{multline*} 
Using that $u$ is bounded in $\dot{H}^{s_c}$, we deduce
$$|V'_R(t)|\leq \frac{C}{R}V_{4R}(t)^{1-\frac{1}{2s_c}}.$$
Integrating between $t$ and $T_+$ and using Step 1, we obtain
$$|V_R(t)|\leq \frac{C}{R}\int_{t}^{T_+}V_{4R}(\tau)^{1-\frac{1}{2s_c}}\,d\tau,$$
which implies \eqref{N5}, concluding the proof.
 \end{proof}

\appendix
\section{Proof of the Claim}
Let $H(t)=\frac{1}{2+t}\int_0^tg(s)ds$. By \eqref{C10}, and since $g$ is bounded,
\begin{equation}
 \label{C12}
 H(t)=\frac{1}{t}\int_0^tg(s)\,ds+O\left(\frac{1}{t}\right)\underset{t\to+\infty}{\longrightarrow} \ell.
\end{equation} 
Furthermore, integrating by parts and using that $g((t)=\frac{d}{dt}\left( (2+t)H(t) \right)$, we obtain
$$\frac{1}{\log(2+T)}\int_0^T\frac{g(t)}{2+t}\,dt=\frac{1}{\log(2+T)}\left(\int_0^T H(t)\,dt+H(T)-H(0)\right).$$
By \eqref{C12}, $\lim_{T\to+\infty}\frac{1}{\log(2+T)}\left(H(T)-H(0)\right)=0$. Moreover, by the change of variable $s=\log(2+t)$,
$$\frac{1}{\log(2+T)}\int_0^T\frac{H(t)}{2+t}\,dt=\frac{1}{\log(2+T)}\int_0^{\log(2+T)}H\left( e^s-2 \right)\,ds.$$
By Ces\`aro mean, we deduce from \eqref{C12} that the preceding goes to $\ell$ as $T\to\infty$ and thus
$$\lim_{T\to\infty} \frac{1}{\log(2+T)}\int_0^T \frac{H(t)}{2+t}\,dt=\ell,$$
which concludes the proof.

\end{document}